\begin{document}
\newcommand{\per}{{\rm per}}
\newtheorem{teorema}{Theorem}
\newtheorem{lemma}{Lemma}
\newtheorem{utv}{Proposition}
\newtheorem{svoistvo}{Property}
\newtheorem{sled}{Corollary}
\newtheorem{con}{Conjecture}

\author{A. A. Taranenko}
\title{On the numbers of 1-factors and 1-factorizations of hypergraphs}
\date{}

\maketitle

\begin{abstract}
A 1-factor of a hypergraph $G=(X,W)$ is a set of hyperedges such that every vertex of $G$ is incident to exactly one hyperedge from the set. A 1-factorization is a partition of all hyperedges of $G$ into disjoint 1-factors. 
The adjacency matrix of a $d$-uniform hypergraph $G$ is the $d$-dimensional (0,1)-matrix of order $|X|$ such that an element $a_{\alpha_1, \ldots, \alpha_d}$ of $A$ equals 1 if and only if $\left\{\alpha_1, \ldots, \alpha_d\right\}$ is a hyperedge of $G$.
Here we estimate the number of 1-factors of uniform hypergraphs and the number of 1-factorizations of complete uniform hypergraphs by means of permanents of their adjacency matrices.
\end{abstract}

\section{Introduction}

Let $G=(V,E)$ be a graph on $n$ vertices. The adjacency matrix $M(G)$ of $G$ is the (0,1)-matrix of order $n$ such that the entry $m_{i,j}$ equals one if and only if the vertices $i$ and $j$ are adjacent. A 1-factor (perfect matching) of the graph $G$ is a 1-regular subgraph that has the same vertex set as $G$.  A 1-factorization of $G$ is a partition of the edges of the graph into disjoint 1-factors. 

It is well known that the number of 1-factors of a bipartite graph with equal parts  coincides with the permanent of its biadjacency matrix (rows of this matrix correspond to the first part, and the columns, to the second). At the same time, this number is equal to the square root of the permanent of the adjacency matrix.  In~\cite{alon}, Alon and Friedland proved that the number of 1-factors of any graph is not greater than  the square root of the permanent of the adjacency matrix. Also, this result was stated in~\cite{gibson}.

Permanents can be used for the estimation of the number of 1-factorizations $\Phi(n)$ of the complete graph $K_n$~\cite{isr}:
$$ \left((1+o(1)) \frac{n}{4e^2}\right) ^{\frac{n^2}{2}} \leq \Phi(n) \leq \left((1+o(1)) \frac{n}{e^2}\right) ^{\frac{n^2}{2}}. $$
The lower bound was obtained by Cameron in~\cite{cam}. This proof requires the van der Waerden conjecture which was proved by Egorychev in~\cite{egor} and Falikman in~\cite{falikman}. 
The upper bound follows from Bregman's theorem for the permanent of (0,1)-matrices~\cite{bregman} and from the result of Alon and Friedland~\cite{alon}.

Of course, there exist several bounds on the number of 1-factorizations of other graphs. For example, for $d$-regular  bipartite  graphs we have the following result proved by Schrijver in~\cite{shriv}:

\begin{teorema} \label{tchriv}
Let $G$ be a $d$-regular bipartite graph on $2n$ vertices. Then the number of 1-factorizations of $G$ is not less than $\left( \frac{d!^2}{d^d}\right)^n.$
\end{teorema}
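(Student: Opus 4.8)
The plan is to reduce the statement to a lower bound on the number of perfect matchings and then iterate over the colour classes. Fix a $d$-regular bipartite graph $G$ with parts of size $n$, and let $F_d(G)$ denote the number of its 1-factorizations written as ordered sequences $(M_1,\dots,M_d)$ of pairwise disjoint 1-factors covering all edges; equivalently, $F_d(G)$ is the number of proper edge-colourings of $G$ with colours $\{1,\dots,d\}$, each colour class being a 1-factor. This is the quantity for which the displayed bound is the natural one: for $G=K_{n,n}$ and $d=n$ it counts Latin squares of order $n$, and $(d!^2/d^d)^n=(n!)^{2n}/n^{n^2}$ is precisely the van der Waerden lower bound for the number of Latin squares, so I will prove the bound for $F_d(G)$.

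The key external ingredient is Schrijver's lower bound on perfect matchings: every $d$-regular bipartite graph $H$ on $2m$ vertices has at least $\left(\frac{(d-1)^{d-1}}{d^{d-2}}\right)^m$ perfect matchings. Granting this, I would argue by induction on $d$. Choosing the colour-$1$ class first gives the recursion
$$ F_d(G) \;=\; \sum_{M} F_{d-1}(G-M), $$
where $M$ ranges over the perfect matchings of $G$ and each $G-M$ is $(d-1)$-regular on the same $2n$ vertices. By the induction hypothesis every summand is at least $\left(\frac{(d-1)!^2}{(d-1)^{d-1}}\right)^n$, and by Schrijver's bound the number of summands is at least $\left(\frac{(d-1)^{d-1}}{d^{d-2}}\right)^n$. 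Multiplying, the two factors $(d-1)^{(d-1)n}$ cancel and one is left with $\left(\frac{(d-1)!^2}{d^{d-2}}\right)^n=\left(\frac{d!^2}{d^d}\right)^n$, exactly the claimed bound; the base case $d=1$ is trivial, since a $1$-regular graph has a unique 1-factorization and $1!^2/1^1=1$. Equivalently, one can skip the induction and telescope directly: applying Schrijver's estimate at each level $k=d,d-1,\dots,1$ yields $F_d(G)\ge\prod_{k=1}^{d}\left(\frac{(k-1)^{k-1}}{k^{k-2}}\right)^n$, and the product collapses to $\left(\frac{d!^2}{d^d}\right)^n$ because $\prod_{k=1}^d(k-1)^{k-1}=\prod_{j=1}^{d-1}j^{j}$ cancels most of $\prod_{k=1}^d k^{k-2}=\big(\prod k^k\big)/(d!)^2$, leaving $(d!)^2/d^d$.

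The main obstacle is not this combinatorial bookkeeping, which is routine, but the perfect-matching estimate itself. Schrijver's bound $\left(\frac{(d-1)^{d-1}}{d^{d-2}}\right)^m$ is a deep result whose proof is considerably harder than the van der Waerden permanent inequality quoted above. The cruder van der Waerden estimate $\per(A)\ge d^m m!/m^m$ for the $(0,1)$-matrix $A$ of a $d$-regular bipartite graph does feed through the same recursion, but it produces a bound with the wrong lower-order factors and does not telescope to the clean expression $\left(\frac{d!^2}{d^d}\right)^n$. I would therefore treat the perfect-matching bound as a black box and concentrate the write-up on verifying the recursion and the telescoping identity, which together turn that estimate into the stated inequality.
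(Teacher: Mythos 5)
Your proposal is correct and follows exactly the route the paper intends: the paper states Theorem~\ref{tchriv} as a simple corollary of Theorem~\ref{nper} (Schrijver's permanent lower bound), and your peel-off-a-matching recursion with the telescoping product $\prod_{k=1}^{d}\frac{(k-1)^{k-1}}{k^{k-2}}=\frac{d!^2}{d^d}$ is precisely that derivation, with the permanent bound treated as a black box just as the paper treats it. Your reading of the bound as counting ordered 1-factorizations (proper edge colorings) also matches how the paper later applies it to $P(B)$ in Lemma~\ref{dvud}.
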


The main aim of this paper is to estimate the number of 1-factors and 1-factorizations of uniform hypergraphs by means of permanents of multidimensional matrices. For this purpose we need the following definitions.

Let $n,d \in \mathbb N$, and let $I_n^d= \left\{ (\alpha_1, \ldots , \alpha_d):\alpha_i \in \left\{1,\ldots,n \right\}\right\}$.
A \textit{$d$-dimensional matrix $A$ of order $n$} is an array $(a_\alpha)_{\alpha \in I^d_n}$, $a_\alpha \in\mathbb R$.

Let $k\in \left\{0,\ldots,d\right\}$. A \textit{$k$-dimensional plane} in $A$ is the submatrix of $A$ obtained by fixing $d-k$ indices and letting the other $k$ indices vary from 1 to $n$. The \textit{direction} of a plane is the (0,1)-vector describing which indices are fixed in the plane. A $(d-1)$-dimensional plane is said to be a \textit{hyperplane}, and let us enumerate all hyperplanes of one direction by numbers $1, \ldots, n.$

For a $d$-dimensional matrix $A$ of order $n$, denote by $D(A)$ the set of its diagonals
$$D(A)=\left\{ (\alpha^1,\ldots,\alpha^n) | \alpha^i \in I_n^d, \forall i\neq j~ \rho (\alpha^i,\alpha^j)=d\right\},$$
where $\rho$ is the Hamming distance (the number of positions at which the corresponding entries are different) and where a diagonal $(\alpha^1, \ldots, \alpha^n)$ should be considered as an unordered set.
Then the \textit{permanent} of a matrix $A$ is 
$$\per A = \sum\limits_{p\in D} \prod\limits_{\alpha \in p} a_\alpha.$$
In other words, the permanent of a $d$-dimensional matrix $A$ of order $n$ is
$$\per A = \sum\limits_{\sigma_1, \ldots, \sigma_{d-1} \in S_n} \prod\limits_{i=1}^n a_{i, \sigma_1(i), \ldots, \sigma_{d-1}(i)},$$
where $\sigma_1, \ldots, \sigma_{d-1}$ are permutations from the symmetric group $S_n$.

Permanents are often useful in the estimation of the number of some combinatorial structures. For instance, Theorem~\ref{tchriv} is a simple corollary from the following result of~\cite{shriv}:

\begin{teorema} \label{nper}
Let $A$ be a matrix of order $n$ with nonnegative integer entries whose row and column sums are equal to $k$. Then
$$\per A \geq \left(\frac{(k-1)^{k-1}}{k^{k-2}}\right)^n.$$
\end{teorema}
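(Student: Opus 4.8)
The plan is to deduce this permanent lower bound from Gurvits's capacity method for real stable polynomials, which I find cleaner to sketch than Schrijver's original inductive argument. To the matrix $A$ I associate the \emph{product polynomial} $p_A(x_1,\dots,x_n)=\prod_{i=1}^n\bigl(\sum_{j=1}^n a_{ij}x_j\bigr)$, a homogeneous polynomial of degree $n$ with nonnegative coefficients whose coefficient of the monomial $x_1\cdots x_n$ is exactly $\per A$. Since each factor is a linear form with nonnegative coefficients, $p_A$ is nonzero whenever all $x_j$ lie in the open upper half-plane, hence $p_A$ is real stable. Two quantities drive the argument: the \emph{capacity} $\operatorname{cap}(p_A)=\inf_{x>0}\,p_A(x)/(x_1\cdots x_n)$, and the per-variable degrees $\deg_{x_j}p_A$, each of which is at most the number of nonzero entries in column $j$, hence at most the column sum $k$ because the entries are nonnegative integers.

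First I would compute the capacity. Writing $A=kS$ with $S$ doubly stochastic, the weighted AM--GM inequality applied row by row (the row weights $s_{ij}$ summing to $1$) gives $\prod_i(\sum_j s_{ij}x_j)\ge\prod_j x_j^{\sum_i s_{ij}}=\prod_j x_j$ using that the column sums of $S$ equal $1$, with equality at $x=(1,\dots,1)$. Hence $\operatorname{cap}(p_S)=1$ and $\operatorname{cap}(p_A)=k^n$.

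Second, the engine is Gurvits's iteration. Set $q_n=p_A$ and, for $i$ from $n$ down to $1$, let $q_{i-1}(x_1,\dots,x_{i-1})=\bigl(\partial q_i/\partial x_i\bigr)\big|_{x_i=0}$; then $q_0=\per A$. Real stability is preserved both by differentiation and by specializing a variable to a real value, so every $q_i$ is real stable. The crux is the one-variable capacity-loss lemma: if $\phi(t)=\sum_{j=0}^d a_j t^j$ has nonnegative coefficients and only real roots, then $a_1=\phi'(0)\ge\bigl(\tfrac{d-1}{d}\bigr)^{d-1}\inf_{t>0}\phi(t)/t$. Applied along the last coordinate of $q_i$ (with the remaining coordinates frozen at the capacity optimizer), this yields $\operatorname{cap}(q_{i-1})\ge G(d_i)\operatorname{cap}(q_i)$, where $d_i=\deg_{x_i}q_i\le k$ and $G(d)=\bigl(\tfrac{d-1}{d}\bigr)^{d-1}$.

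Finally I would assemble the telescope: since $G$ is decreasing and each $d_i\le k$, one gets $\per A=\operatorname{cap}(q_0)\ge\operatorname{cap}(p_A)\prod_{i=1}^n G(d_i)\ge k^n G(k)^n=\bigl(k(\tfrac{k-1}{k})^{k-1}\bigr)^n=\bigl(\tfrac{(k-1)^{k-1}}{k^{k-2}}\bigr)^n$. The main obstacle is the one-variable lemma, whose constant is verified to be sharp by checking that $\inf_{t>0}(1+t)^d/t$ is attained at $t=1/(d-1)$ with value $d^d/(d-1)^{d-1}$, so that $G(d)$ exactly compensates it; the extremal $\phi(t)=(1+t)^d$ corresponds to the block-regular matrices that are tight for the theorem. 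The bookkeeping that forces the column-sum hypothesis into the exponent is precisely the bound $d_i\le k$ (rather than merely $d_i\le i$), which is where the regularity of $A$ enters decisively.
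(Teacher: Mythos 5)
The paper offers no proof of this theorem: it is Schrijver's lower bound, imported verbatim from \cite{shriv} and used as a black box (directly in the $d=3$ case of Lemma~\ref{dvud}, and through its corollary Theorem~\ref{tchriv} in the case $d\geq 4$). So the relevant comparison is with Schrijver's original argument and with the capacity proof of Gurvits \cite{gurvits}, and the latter is exactly the route you take --- the paper itself points to \cite{gurvits} as the strengthening to real matrices with bounded support. Your outline is correct in all its checkable steps: $\operatorname{cap}(p_A)=k^n$ follows from weighted AM--GM applied to $A=kS$ with $S$ doubly stochastic; $\deg_{x_j}p_A\leq k$ because a column of nonnegative integers summing to $k$ has at most $k$ nonzero entries (the only place integrality enters); $G$ is decreasing, so the telescoped product is at least $k^nG(k)^n=\left(\frac{(k-1)^{k-1}}{k^{k-2}}\right)^n$ as claimed. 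Two caveats. First, essentially all of the difficulty is concentrated in the univariate lemma $\phi'(0)\geq G(d)\inf_{t>0}\phi(t)/t$ for real-rooted $\phi$ with nonnegative coefficients of degree at most $d$; you state it correctly and verify sharpness on $(1+t)^d$, but you do not prove it, so what you have is a correct reduction to Gurvits's lemma rather than a self-contained proof. Second, ``frozen at the capacity optimizer'' is imprecise, since the infimum defining the capacity need not be attained; the standard fix is to apply the univariate lemma at an arbitrary positive point $(x_1,\ldots,x_{i-1})$ and pass to the infimum only afterwards. With those repairs the argument is sound; relative to Schrijver's original inductive double-counting proof it is much shorter (granting the stable-polynomial toolkit) and proves more than the paper needs, namely the same bound for nonnegative real matrices with constant line sums and at most $k$ nonzero entries per column.
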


A strengthening of this result for non-negative real matrices where each row contains not more than $k$ non-zero entries is obtained in~\cite{gurvits}.
For additional information about permanents of 2-dimensional matrices we refer the reader to the classic book~\cite{minc}.

There are very few bounds on the permanent of multidimensional matrices. The simplest one can be proved by induction on the order of matrices:

\begin{utv} \label{triv}
Let $A$ be a $d$-dimensional (0,1)-matrix of order $n$.  Suppose that for $i \in \left\{1, \ldots, n\right\}$ the number of ones in the $i$th hyperplane of some direction of the matrix $A$ is not greater than $r_i$. Then
$$\per A \leq \prod \limits_{i=1} ^n r_i.$$
\end{utv}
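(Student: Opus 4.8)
The plan is to prove Proposition~\ref{triv} by induction on the order $n$ of the matrix. The key idea is to use the second formula for the permanent given in the excerpt, namely
$$\per A = \sum_{\sigma_1, \ldots, \sigma_{d-1} \in S_n} \prod_{i=1}^n a_{i, \sigma_1(i), \ldots, \sigma_{d-1}(i)},$$
and to expand it along the chosen direction. Without loss of generality I would assume that the $r_i$ bound the number of ones in the hyperplanes obtained by fixing the \emph{first} index equal to $i$; that is, the $i$th such hyperplane is the $(d-1)$-dimensional slice $\{a_{i,\alpha_2,\ldots,\alpha_d}\}$, and it contains at most $r_i$ ones.

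For the base case $n=1$ the matrix is a single entry and the claim is immediate. For the inductive step, I would group the terms of the permanent according to the value of the first factor's index tuple that uses the index $i=1$. Concretely, in every diagonal exactly one entry $a_{1,\sigma_1(1),\ldots,\sigma_{d-1}(1)}$ lies in the first hyperplane (the slice with first coordinate $1$). I would first note that if this entry is $0$ the whole product vanishes, so only the at most $r_1$ positions where this slice has a one can contribute. I would fix one such position, say $\alpha=(1,\beta_2,\ldots,\beta_d)$, and consider the sum of the remaining products over all diagonals whose first-hyperplane entry is exactly $\alpha$. Deleting the first hyperplane together with the hyperplanes $\beta_2,\ldots,\beta_d$ in the other directions yields a $d$-dimensional $(0,1)$-matrix $A'$ of order $n-1$, and the contribution of these diagonals equals $\per A'$.

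The crux of the argument is that deleting these hyperplanes does not increase the per-hyperplane counts in the chosen direction: each remaining slice with first coordinate $i\in\{2,\ldots,n\}$ is a submatrix of the original $i$th hyperplane, so it still contains at most $r_i$ ones. Hence by the induction hypothesis $\per A' \le \prod_{i=2}^n r_i$ for each choice of $\alpha$. Summing over the at most $r_1$ admissible positions $\alpha$ gives
$$\per A \le r_1 \prod_{i=2}^n r_i = \prod_{i=1}^n r_i,$$
which is the desired bound.

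I expect the main obstacle to be purely bookkeeping: making the hyperplane-deletion operation precise in the multidimensional setting and verifying that the diagonals of $A$ whose first-hyperplane entry is $\alpha$ biject cleanly onto the diagonals of the reduced matrix $A'$ (so that the product of the remaining $n-1$ entries is exactly a term of $\per A'$). One must be careful that fixing the position $\alpha$ pins down one value of each of the $d-1$ permutations at the argument $1$, and that the residual permutations on the remaining $n-1$ symbols range freely, which is what produces a genuine order-$(n-1)$ permanent rather than an over- or under-count. Once the reduction is set up correctly, the monotonicity of the hyperplane counts under submatrix restriction makes the inductive step and the final telescoping product routine.
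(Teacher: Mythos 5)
Your argument is correct: grouping the diagonals by the unique entry they use in the first hyperplane, reducing to an order-$(n-1)$ matrix whose remaining hyperplane counts can only decrease, and invoking the induction hypothesis is exactly the induction on the order of the matrix that the paper itself indicates (without writing out the details) as the proof of Proposition~\ref{triv}. The bookkeeping you flag is indeed routine, since each diagonal meets every hyperplane of every direction exactly once, so the bijection with diagonals of the reduced matrix is immediate.
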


The first nontrivial upper bound on the permanent of 3-dimensional (0,1)-matrices was proved by Dow and Gibson~\cite{dow}:
\begin{teorema} \label{tdow}
Let $A$ be a 3-dimensional (0,1)-matrix of order $n$. Suppose that for $i \in \left\{1, \ldots, n\right\}$ the number of ones in the $i$th hyperplane of some direction of the matrix $A$ is not greater than $r_i$. Then
$${\rm per} A \leq \prod\limits_{i=1}^n r_i!^{1/r_i}.$$
\end{teorema}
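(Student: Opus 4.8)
This is the three-dimensional analogue of Bregman's theorem, whose two-dimensional form carries the identical bound $\prod_i(r_i!)^{1/r_i}$, and the plan is to adapt the entropy proof of Bregman's inequality (Radhakrishnan, Schrijver). Fix the direction so that the $i$th hyperplane is $(a_{i,j,k})_{j,k}$ and contains at most $r_i$ ones. A diagonal with product $1$ is exactly a pair of permutations $(\sigma_1,\sigma_2)$ with $a_{i,\sigma_1(i),\sigma_2(i)}=1$ for every $i$, so $\per A$ counts such pairs. I would take a uniformly random valid pair $X=(\sigma_1,\sigma_2)$; then its Shannon entropy is $H(X)=\log\per A$, and the target becomes $H(X)\le\sum_i\frac{1}{r_i}\log(r_i!)$.

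First I would reveal the coordinates $Y_i=(\sigma_1(i),\sigma_2(i))$ one index at a time. For any fixed revelation order the chain rule gives $H(X)=\sum_i H(Y_i\mid Y_{W_i})$, where $W_i$ is the set of indices revealed before $i$; averaging this identity over a uniformly random order $\pi$ and interchanging the summations reduces everything to the conditional entropies. Since a conditional entropy is at most the logarithm of the number of admissible values, $H(Y_i\mid Y_{W_i})\le\mathbb{E}\log R_i$, where $R_i$ is the number of ones $(j,k)$ of the $i$th hyperplane that are still available, meaning that $j$ has not been used by $\sigma_1$ on $W_i$ and $k$ has not been used by $\sigma_2$ on $W_i$. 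It therefore suffices to establish, for every fixed valid pair $X$, the per-hyperplane estimate
$$\mathbb{E}_{\pi}\log R_i\le\frac{1}{r_i}\log(r_i!).$$

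To analyse $R_i$, enumerate the ones of the $i$th hyperplane as $(j_t,k_t)$ for $t=1,\dots,r_i$, with $(j_{t_0},k_{t_0})=(\sigma_1(i),\sigma_2(i))$, and set $u_t=\sigma_1^{-1}(j_t)$ and $v_t=\sigma_2^{-1}(k_t)$. The one $t$ is available precisely when both $u_t$ and $v_t$ are revealed no earlier than $i$, that is, when $i$ precedes every other element of $S_t=\{i,u_t,v_t\}$ in the order $\pi$; thus $R_i$ is the number of indices $t$ for which $i$ is the $\pi$-first element of $S_t$. Because the cells $(j_t,k_t)$ are distinct and $\sigma_1,\sigma_2$ are bijections, the pairs $(u_t,v_t)$ are pairwise distinct, exactly one of them being $(i,i)$. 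Replacing each three-element $S_t$ by any two-element subset that still contains $i$ only enlarges the event ``$i$ is $\pi$-first'' for that $t$, hence increases $R_i$ pointwise in $\pi$; so it is enough to bound the expectation in the reduced situation where every $S_t$ has at most two elements, exactly one of which (namely $S_{t_0}$) is the singleton $\{i\}$.

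The remaining inequality --- that for such a family of $m=r_i$ sets one has $\mathbb{E}_{\pi}\log R\le\frac{1}{m}\log(m!)$, where $R$ counts the sets whose $\pi$-first element is $i$ --- is the step I expect to be the main obstacle. Writing each two-element set as $\{i,w_t\}$, this is a statement about the weighted count $R=1+\sum_{t\ne t_0}\mathbf{1}[\,i\text{ precedes }w_t\,]$, and the subtlety is that the $w_t$ may repeat, so one cannot simply drop to a single coordinate: if one discarded the $k$-constraint the blockers $u_t$ need not be distinct, and in the extreme case $u_t=i$ for all $t$ one would get $R=r_i$ and the false bound $\log r_i$. The extremal configuration is instead the synchronized one, $u_t=v_t$ with these common values distinct: then the rank of $i$ among the $r_i$ relevant indices is uniform, $R$ is uniform on $\{1,\dots,r_i\}$, and $\mathbb{E}_{\pi}\log R=\frac{1}{r_i}\sum_{s=1}^{r_i}\log s=\frac{1}{r_i}\log(r_i!)$, so the bound is met with equality; I would prove that no family of sets exceeds this value. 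Granting the per-hyperplane estimate, the monotonicity of $s\mapsto\frac{1}{s}\log(s!)$ lets one replace the actual number of ones by the upper bound $r_i$, and summing over $i$ gives $H(X)=\log\per A\le\sum_i\frac{1}{r_i}\log(r_i!)$, which is the asserted inequality. (Alternatively one may write $\per A=\sum_{\sigma}\per B^{\sigma}$ with $b^{\sigma}_{i,k}=a_{i,\sigma(i),k}$, apply the two-dimensional Bregman theorem to each $B^{\sigma}$, and reduce to an analogous extremal bound for a weighted permanent.)
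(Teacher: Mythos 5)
First, a point of comparison: the paper does not prove this statement at all. Theorem~\ref{tdow} is imported verbatim from Dow and Gibson~\cite{dow}, so there is no in-paper argument to measure yours against. Judged on its own, your proposal sets up the entropy framework correctly (the chain rule over a random revelation order, bounding each conditional entropy by the expected logarithm of the number of still-available ones in the $i$th hyperplane), and it correctly isolates exactly where the three-dimensional case differs from the two-dimensional Bregman theorem. But it does not prove the theorem, because the one step that carries all of the content is left as an assertion.

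Concretely, your argument reduces everything to the claim that for distinct pairs $(u_t,v_t)$, $t=1,\dots,r$, exactly one of which is $(i,i)$, a uniformly random order $\pi$ satisfies $\mathbb{E}_\pi \log R \le \frac{1}{r}\log(r!)$, where $R$ counts the indices $t$ for which neither $u_t$ nor $v_t$ precedes $i$. You observe, correctly, that the naive projection to a single coordinate fails because the $u_t$ may repeat; you guess the extremal configuration; and you then write that you ``would prove that no family of sets exceeds this value'' --- but that sentence \emph{is} the theorem. Your proposed reduction of each $S_t=\{i,u_t,v_t\}$ to a two-element subset containing $i$ does increase $R$ pointwise (so it is sound as a reduction), yet it destroys the distinctness of the pairs and leaves an arbitrary multiset of singleton blockers, for which the desired inequality is again a nontrivial claim requiring its own proof (it appears to be true, via a majorization argument comparing the distribution of $R$ with the uniform distribution on $\{1,\dots,r\}$, but none of that is carried out). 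The parenthetical alternative you mention --- writing $\per A=\sum_{\sigma}\per B^{\sigma}$ and applying the two-dimensional Bregman bound to each slice --- is in fact close to Dow and Gibson's actual route, but it too terminates at an unproved weighted-permanent inequality. Either way, the key lemma is missing, so what you have is a plausible outline rather than a proof.
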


Several attempts to obtain exact bounds on multidimensional permanents were made in~\cite{my}.

Next recall some definitions on hypergraphs.

The pair $G=(X,W)$ is called a \textit{$d$-uniform hypergraph} on $n$ vertices  with vertex set $X$  and  hyperedge set $W$ if $|X|=n$ and each hyperedge $w \in W$ is a set of $d$ vertices.  A hypergraph $G$ is called \textit{simple} if it has no multiple hyperedges.
The  \textit{degree} of a vertex $x \in X$ in a hypergraph $G$ is the number of hyperedges containing $x$. 

A hypergraph $G=(X,W)$ is  \textit{connected} if for all vertices $x_1, x_2 \in X$ there exists a sequence  of hyperedges $w_1, \ldots, w_l$ such that $x_1 \in w_1$, $x_2 \in w_l$, and for all $i \in \left\{1, \ldots, l-1\right\}$ it holds $w_i \cap w_{i+1} \neq \emptyset.$

A \textit{1-factor} of a hypergraph $G$ is a set of hyperedges  such that every vertex of the hypergraph is incident to exactly one hyperedge from the set. A \textit{1-factorization} of $G$ is an ordered partition of all hyperedges of the hypergraph into disjoint 1-factors. Denote by $\varphi(G)$ the number of 1-factors of $G$, and by $\Phi(G)$ the number of 1-factorizations of $G$. 

A hypergraph $G$ is said to be \textit{1-factorable} if it admits a 1-factorization.
A $d$-uniform hypergraph  $G=(X,W)$ in which all vertices have the same degree $k$ is called a \textit{$d$-uniform $k$-factor}.

An \textit{$n$-balanced $d$-partite} hypergraph is a $d$-uniform hypergraph such that its vertex set consists of $d$ parts of size $n$, each hyperedge containing precisely one vertex from each part.

The \textit{incidence matrix} of a hypergraph $G$ is a $|X| \times |W|$ matrix $(b_{i,j})$ such that $b_{i,j} = 1$ if the vertex $x_i$ and the hyperedge $w_j$ are incident, and 0 otherwise.  
The \textit{adjacency matrix} $M(G)$ of a $d$-uniform hypergraph $G$ is the $d$-dimensional (0,1)-matrix of order $n$ such that an entry $m_{\alpha_1, ... ,\alpha_d}$ equals one if and only if $\{x_{\alpha_1}, ... ,x_{\alpha_d}\}$ is a hyperedge of $G$.

The paper is organized as follows. In the next section we prove that the number of 1-factors of a simple $d$-uniform hypergraph $G$ on $n$ vertices satisfies
$$\varphi(G) \leq \left(\frac{\per M(G)}{\mu(n,d)} \right)^{1/d},$$
where $\mu(n,2)=1,$ $\mu(n,3)= \left( \frac{2^{3/2}}{3} \right)^n$ for all integer $n$ and
$$\mu(n,d) = \left( \frac{d!^{2}}{d^{d} d!^{1/d}} \right) ^n$$
for all $d \geq 4.$

Then using this result, we prove an asymptotic upper bound on the number of 1-factorizations of the $d$-uniform complete hypergraph $G^d_n$ on $n$ vertices:
$$\Phi(G^d_n) \leq \left((1+o(1))\frac{ n^{d-1}}{\mu(n,d)^{1/n} e^d} \right)^{\frac{n^d}{d!}} \mbox{ as }n \rightarrow \infty.$$

\section{An upper bound on the number of 1-factors of hypergraphs}

Let $G$ be a simple $d$-uniform hypergraph on $n$ vertices.  It is clear that if there exists a 1-factor of the hypergraph $G$, then the number of vertices $n$ is divisible by $d$. Therefore, below we will consider only $n$ which is a multiple of $d$.

As for graphs, the characterization problem of hypergraphs having a 1-factor is rather complicated. Often sufficient conditions on such hypergraphs can be expressed as demanding that each $k$-element set of vertices is covered by a sufficiently large number of hyperedges. There are many papers concerning the existence problem of 1-factors of hypergraphs (see, for example,  \cite{lo,rodl,zhao}). 

It is easy to prove that the number of 1-factors of a hypergraph $G$ is not greater than the permanent of its adjacency matrix. Indeed, let the hyperedges $e_1, \ldots, e_{n/d}$ form a 1-factor in $G$. Fix some permutation of vertices for each $e_i$ and construct ordered $d$-tuples $\alpha^1, \ldots, \alpha^n$ by a cyclic permutation. By the definition of adjacency matrix, we have $m_{\alpha^i}=1$ for all $i=1,\ldots,n.$ Moreover, the Hamming distance between different  $\alpha^i$ and $\alpha^j$ is equal to $d$. Consequently, $\alpha^1, \ldots, \alpha^n$ form a unity diagonal in the matrix $M(G)$, and $\varphi(G) \leq \per M(G).$

The main result of this section is the following theorem, that strengthens this bound:

\begin{teorema} \label{fact}
Let $G$ be a simple $d$-uniform hypergraph on $n$ vertices, and let $d$ divide $n$. Define the function $\mu(n,d)$ by $\mu(n,2) =1,$ $\mu(n,3)= \left( \frac{2^{3/2}}{3} \right)^n$  for all integer $n$ and
$$\mu(n,d) = \left( \frac{d!^{2}}{d^{d} d!^{1/d}} \right) ^n$$
for all $d \geq 4.$
Then the number of 1-factors of the hypergraph $G$ satisfies
$$\varphi(G) \leq \left(\frac{\per M(G)}{\mu(n,d)} \right)^{1/d}.$$
\end{teorema}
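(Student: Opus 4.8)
The plan is to prove the equivalent lower bound $\per M(G)\ge\mu(n,d)\,\varphi(G)^d$ and then take $d$-th roots. The elementary bound $\varphi(G)\le\per M(G)$ was obtained by assigning to each 1-factor a single unity diagonal; the point now is that an entire $d$-tuple of 1-factors should be encoded, each $d$-tuple giving rise to many diagonals. So I would start from the $\varphi(G)^d$ ordered $d$-tuples $(F_1,\dots,F_d)$ of 1-factors and \emph{overlay} them: the multiset $H=F_1\uplus\dots\uplus F_d$ is a $d$-regular $d$-uniform multi-hypergraph whose hyperedges all lie in $G$, each vertex being covered exactly $d$ times. Dually, every nonzero term $(\sigma_1,\dots,\sigma_{d-1})$ of $\per M(G)$ produces the multiset of $n$ hyperedges $\{\,\{i,\sigma_1(i),\dots,\sigma_{d-1}(i)\}\,\}_{i=1}^n$, again a $d$-regular $d$-uniform multi-hypergraph. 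Grouping by the multi-hypergraph produced, write $\per M(G)=\sum_H N(H)$, where $N(H)$ is the number of diagonals producing $H$, and $\varphi(G)^d=\sum_H P(H)$, where $P(H)$ is the number of ordered $d$-tuples overlaying to $H$ (equivalently, the number of ordered 1-factorizations of $H$); note that $P(H)$ vanishes unless $H$ is such an overlay. It therefore suffices to prove the termwise estimate $N(H)\ge\mu(n,d)\,P(H)$ for every overlay $H$, since summing it and discarding the nonnegative contributions of the remaining $H$ yields $\per M(G)\ge\mu(n,d)\,\varphi(G)^d$.

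The next step is to read off both counts from the bipartite incidence graph $\mathcal I(H)$ between the $n$ vertices and the $n$ labelled hyperedges of $H$, which is $d$-regular on $2n$ vertices. A diagonal producing $H$ amounts to choosing a ``home'' hyperedge for each vertex together with an ordering of each hyperedge making every coordinate a permutation; unwinding this identifies the diagonals producing $H$ with the proper edge $d$-colourings (ordered 1-factorizations) of $\mathcal I(H)$, while the $d$-tuples overlaying to $H$ correspond to the colourings of the hyperedges of $H$ that are rainbow around every vertex, i.e. to the ordered 1-factorizations of the hypergraph $H$ itself. Since relabelling equal hyperedge-copies multiplies both counts by the same factor $\prod_e (m_e!)$, the ratio $N(H)/P(H)$ equals $N^+(H)/P^+(H)$, where $N^+,P^+$ are the versions with all copies labelled. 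Now Schrijver's Theorem~\ref{tchriv}, applied to the $d$-regular bipartite graph $\mathcal I(H)$, gives $N^+(H)\ge\bigl(d!^{2}/d^{d}\bigr)^{n}$, while an upper bound of the form $P^+(H)\le d!^{\,n/d}$ on the number of 1-factorizations of the $d$-regular $d$-uniform hypergraph $H$ (with equality for a disjoint union of complete blocks on $d$ vertices) yields, for $d\ge4$,
$$\frac{N^+(H)}{P^+(H)}\ \ge\ \frac{(d!^{2}/d^{d})^{n}}{d!^{\,n/d}}\ =\ \mu(n,d).$$
For $d=2$ this degenerates into the equality $N^+(H)=P^+(H)$ with $\mu(n,2)=1$, recovering the Alon--Friedland bound.

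The main obstacle is the upper bound on $P^+(H)$, the number of 1-factorizations of the $d$-uniform multi-hypergraph $H$: this is exactly the place where the relevant adjacency tensor is genuinely $d$-dimensional, so it must be controlled by a permanent bound for multidimensional matrices rather than by Bregman's two-dimensional theorem. For $d\ge4$ only the elementary estimate (Proposition~\ref{triv}) together with iterated perfect-matching counts is available, and I expect these to produce precisely the crude factor $d!^{1/d}$ per hyperedge, hence the stated $\mu(n,d)$. For $d=3$ the tensor is three-dimensional, so Dow and Gibson's Theorem~\ref{tdow} applies; here the improvement to $\mu(n,3)=(2^{3/2}/3)^{n}$ cannot come from a uniform bound on $\sup_H P^+(H)$ (a disjoint union of complete triangles already forces $P^+(H)=6^{\,n/3}$), so Dow--Gibson must be exploited in an $H$-dependent, coupled fashion, and making this sharp is the delicate quantitative core. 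I also anticipate that the bookkeeping of repeated hyperedges in $H$—needed to make the two correspondences with $\mathcal I(H)$ and with the 1-factorizations of $H$ exact, so that the labelled ratio is legitimately used—will require some care.
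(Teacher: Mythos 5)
Your overall architecture is the same as the paper's: you group the ordered $d$-tuples of 1-factors and the diagonals contributing to $\per M(G)$ by the $d$-regular multi-hypergraph $H$ they induce, reduce everything to the termwise inequality $N(H)\ge\mu(n,d)\,P(H)$ (this is exactly the paper's Proposition~\ref{base}), pass to the $d$-regular bipartite incidence graph, identify $N(H)$ with proper edge $d$-colourings and $P(H)$ with proper decompositions of the hyperedge side, cancel the common multiplicity factor $\prod_e m_e!$, and invoke Schrijver's Theorem~\ref{tchriv} for the lower bound. For $d\ge 4$ the one piece you leave unproved, $P^{+}(H)\le d!^{\,n/d}$, is precisely what the paper establishes by the greedy count you allude to: the first colour class is built with at most $d$ choices for each of its $n/d$ members, the second in the residual $(d-1)$-regular graph with at most $d-1$ choices each, and so on, giving $\prod_{j=1}^{d}j^{\,n/d}=d!^{\,n/d}$. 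So for $d\ne 3$ your plan is essentially the paper's proof with one routine lemma left to write out.

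The genuine gap is the case $d=3$, and your guess about how to close it points in the wrong direction: Dow and Gibson's Theorem~\ref{tdow} is not used anywhere in the paper's proof of Theorem~\ref{fact}. The gain over $6^{\,n/3}$ comes not from a multidimensional permanent bound but from an $H$-dependent sharpening of the upper bound on $T(B)=P^{+}(H)$ combined with a small sharpening of the Schrijver lower bound. Concretely, the paper first reduces to connected $H$ (all of $\Phi$, $\Delta$ and $\mu(\cdot,d)$ are multiplicative over components), and then grows the three colour classes simultaneously in the connected $3$-regular bipartite graph $B$: by connectivity one can always pick a vertex of $X$ meeting exactly one class, whose two remaining neighbours can be split between the other two classes in only $2$ ways, and after that every vertex of $X$ meeting exactly two classes has its third neighbour forced; since each binary choice accounts for at least two new vertices of $Y$, this yields $T(B)\le 3\cdot 2^{n/2-1}$. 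On the other side, $P(B)\ge 2\left(\frac{4}{3}\right)^{n}$, since Theorem~\ref{nper} gives at least $(4/3)^{n}$ perfect matchings and the $2$-regular remainder always admits at least $2$ ordered $1$-factorizations; the ratio is then below $3^{n}/2^{3n/2}=1/\mu(n,3)$. Your observation that a disjoint union of triple hyperedges already forces $P^{+}(H)=6^{\,n/3}$ is correct, and it is exactly why the reduction to connected components is indispensable here --- but the missing mechanism is this connectivity-driven counting, not Theorem~\ref{tdow}.
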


Let us first prove the easy corollaries of the theorem:

\begin{sled} \label{factsled}
If $d \neq 3$, then the number of 1-factors of a simple $d$-uniform hypergraph is not greater than the $d$th root of the permanent of its adjacency matrix:
$$\varphi(G) \leq (\per M(G))^{1/d}.$$
\end{sled}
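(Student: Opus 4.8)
The plan is to deduce the corollary directly from Theorem~\ref{fact} by showing that the correction factor $\mu(n,d)$ is at least $1$ whenever $d \neq 3$. Indeed, $\per M(G) \geq 0$ and the map $t \mapsto t^{1/d}$ is nondecreasing on $[0,\infty)$, so once $\mu(n,d) \geq 1$ we get
$$\varphi(G) \leq \left(\frac{\per M(G)}{\mu(n,d)}\right)^{1/d} \leq \left(\per M(G)\right)^{1/d},$$
which is exactly the asserted bound. Thus the whole task reduces to verifying the numerical inequality $\mu(n,d) \geq 1$.

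First I would dispose of $d=2$, where $\mu(n,2)=1$ by definition, so there is nothing to prove. For $d \geq 4$ we have $\mu(n,d) = c_d^{\,n}$ with $c_d = \frac{d!^{2}}{d^{d} d!^{1/d}}$; since $n \geq 1$, it suffices to show $c_d \geq 1$, that is
$$d!^{\,2 - 1/d} \geq d^{d},$$
equivalently, after raising to the $d$th power, $d!^{\,2d-1} \geq d^{d^2}$, or in logarithmic form $(2-1/d)\ln(d!) \geq d\ln d$.

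The main obstacle is that this inequality is \emph{tight} at $d=4$: there $\tfrac{d!^2}{d^d} = 2.25$ only barely exceeds $d!^{1/d} = 24^{1/4} \approx 2.213$, so any crude factorial estimate will fail to close the gap uniformly. I would therefore split the range. For $d=4$ I would argue with integers: $24^{7} = 2^{21}\cdot 3^{7} = 2^{21}\cdot 2187 > 2^{21}\cdot 2048 = 2^{32} = (4^{4})^{4}$, which gives $24^{7/4} > 4^{4}$ and hence $c_4 \geq 1$. For $d \geq 5$ I would insert Stirling's lower bound $\ln(d!) \geq d\ln d - d + \tfrac12\ln(2\pi d)$ into the logarithmic form of the target inequality; this reduces the claim to the elementary one-variable inequality $h(d) \geq 0$, where $h(d) = d\ln d - 2d + \ln(2\pi) + 1 - \tfrac{1}{2d}\ln(2\pi d)$, which I would check directly at $d=5$ and then confirm stays positive by noting it is increasing for $d \geq 5$ (the leading term $d\ln d - 2d$ already has positive derivative $\ln d - 1$ there). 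Combining the two ranges gives $\mu(n,d) \geq 1$ for every $d \neq 3$, and the corollary follows from Theorem~\ref{fact}.
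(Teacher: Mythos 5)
Your proposal is correct and follows exactly the paper's route: the paper's entire proof is the one-line observation that $\mu(n,d)\geq 1$ for all $d\geq 4$ and $n\geq d$ (with $\mu(n,2)=1$ trivially), which is precisely your reduction. The only difference is that you actually carry out the numerical verification --- including the genuinely tight case $d=4$, where $24^{7/4}>4^4$ needs the integer check $2^{21}\cdot 3^7>2^{32}$ --- which the paper leaves to the reader.
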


\begin{proof}
It is sufficient to note that the function  $\mu(n,d) $ is not less than one for all $d\geq 4$ and $n\geq d$.
\end{proof} 

Note that the case $d =3$ is exceptional. Despite our efforts, we are not succeed in a proof of the following statement that is likely to be true.

\begin{con}
The number of 1-factors of a simple $3$-uniform hypergraph is not greater than the cube root of the permanent of its adjacency matrix.
\end{con}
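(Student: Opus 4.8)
The plan is to adapt the superposition argument of Alon and Friedland, which proves the case $d=2$ with the sharp constant $\mu(n,2)=1$, to the three-dimensional setting. Recall that a unity diagonal of $M(G)$ is a pair $(\sigma_1,\sigma_2)\in S_n^2$ with $\{i,\sigma_1(i),\sigma_2(i)\}$ a hyperedge for every $i$; it assigns to each vertex $i$ an incident hyperedge $h(i)=\{i,\sigma_1(i),\sigma_2(i)\}$, and in the resulting multiset $U=\{h(i):i\in X\}$ every vertex is covered exactly three times. For such a multiset $U$ let $Q(U)$ be the number of unity diagonals with hyperedge multiset $U$, and let $P(U)$ be the number of ordered triples $(F_1,F_2,F_3)$ of $1$-factors of $G$ whose superposition $F_1\uplus F_2\uplus F_3$ equals $U$. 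Summing $Q$ over all admissible $U$ recovers $\per M(G)$, while summing $P$ over the (sub)family of $U$ arising from triples gives $\varphi(G)^3$; hence
$$\per M(G)=\sum_U Q(U)\ \ge\ \sum_U P(U)=\varphi(G)^3,$$
the multisets not arising from any triple only enlarging the left-hand side, provided one establishes the local inequality $Q(U)\ge P(U)$ for every $U$ coming from a triple.

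First I would carry out the reduction to connected components. Decompose $U$ into its hyperedge-connected components $C$ with vertex sets $V_C$; since every hyperedge meeting $V_C$ lies in $C$, both $\sigma_1$ and $\sigma_2$ preserve each $V_C$, and every $1$-factor restricts to a partition of $V_C$. Consequently $Q(U)=\prod_C Q(C)$ and $P(U)=\prod_C P(C)$, so it suffices to prove $Q(C)\ge P(C)$ for each connected $3$-uniform multi-hypergraph $C$ covering its vertices exactly three times. This exactly mirrors the graph case, where the components are precisely even cycles and doubled edges and one has the \emph{identity} $Q(C)=P(C)=2^{c}$, with $c$ the number of genuine cycles.

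The core step is the local inequality $Q(C)\ge P(C)$. Here $P(C)$ counts the ordered ways to split the hyperedges of $C$ into three perfect $1$-factors of $V_C$, whereas $Q(C)$ counts the vertex-orientations producing two genuine permutations of $V_C$. I would attack it by constructing an injection from the ordered local $3$-factorizations into the unity diagonals, via a canonical orientation rule generalizing the alternating orientation of an even cycle, while exploiting the $S_3$-symmetry enjoyed by diagonals of the \emph{symmetric} tensor $M(G)$: from any valid $(\sigma_1,\sigma_2)$ the pairs $(\sigma_1^{-1},\sigma_2\sigma_1^{-1})$ and $(\sigma_2^{-1},\sigma_1\sigma_2^{-1})$ are again unity diagonals, so the diagonals supported on $C$ fall into orbits of size dividing six. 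This extra multiplicity, invisible to the naive edge-by-edge matching, is the natural source of the slack needed to reach $\mu=1$; and as a lower bound on the number of admissible orientations inside a fixed component one may feed the regular vertex–hyperedge incidence matrix of $C$ into Schrijver's Theorem~\ref{nper}.

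The main obstacle is precisely this local inequality. Unlike the graph case, the components $C$ form an unbounded family of genuinely three-dimensional objects for which no canonical alternating orientation exists, and the correspondence between diagonals and superpositions is not multiplicity-preserving: a diagonal may use a hyperedge of $C$ fewer than three times and may blend hyperedges originating from different factors, as already small examples on six vertices show. The authors' weaker bound amounts to a per-vertex estimate that forfeits the factor $2^{3/2}/3$ appearing in $\mu(n,3)$, and erasing this deficit is exactly what resists the orientation argument. A complete proof will therefore require either an Eulerian-type global orientation of each component that makes the injection explicit, or a convexity/entropy refinement establishing that the components on which the naive local bound is deficient cannot simultaneously tile $G$.
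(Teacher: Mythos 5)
There is a genuine gap, and it is worth noting first that the statement you are asked to prove is presented in the paper as an open conjecture: the author explicitly states that the case $d=3$ is exceptional and that the paper's method only yields the weaker bound with $\mu(n,3)=\left(2^{3/2}/3\right)^n$ in place of $1$. Your proposal does not close this gap; it reconstructs, in different notation, exactly the paper's own framework. Your multiset $U$, the counts $Q(U)$ and $P(U)$, and the reduction $\per M(G)\geq \sum_U Q(U)\geq \sum_U P(U)=\varphi(G)^3$ are precisely the paper's sets $\gamma(G)$ and $\mathfrak{F}(G)$: a unity diagonal $(\sigma_1,\sigma_2)$ is the same thing as a proper orientation of the $3$-uniform $3$-factor $F$ with hyperedge multiset $U$, so $Q(U)=\Delta(F)$ and $P(U)=\Phi(F)$, and your component decomposition is the paper's Lemma~1. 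Hence the entire content of the conjecture is concentrated in your ``core step'' $Q(C)\geq P(C)$ for connected components, which you state but do not prove --- as you yourself concede in the final paragraph.

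None of the three tools you suggest for the core step can deliver it as described. First, the $S_3$-action $(\sigma_1,\sigma_2)\mapsto(\sigma_1^{-1},\sigma_2\sigma_1^{-1})$ does map diagonals to diagonals (by symmetry of $M(G)$), but it preserves the hyperedge multiset $U$, so it permutes the very set of size $Q(U)$ you are trying to bound from below; an internal symmetry of that set cannot manufacture the multiplicity you need relative to $P(U)$. Second, feeding the bipartite vertex--hyperedge incidence structure into Schrijver's Theorem~\ref{nper} is exactly what the paper does in Lemma~\ref{dvud}: it gives $P(B)\geq 2\left(\frac{4}{3}\right)^n$ against the decomposition count $T(B)\leq 3\cdot 2^{\frac{n}{2}-1}$, and the resulting ratio is precisely the deficit $\left(3/2^{3/2}\right)^n$ --- that is, this route provably lands you at the paper's Theorem~\ref{fact} for $d=3$, not at the conjecture. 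Third, the ``canonical orientation rule generalizing the alternating orientation of an even cycle'' is named but never constructed, and the paper's remark that connected components for $d=3$ form an unbounded family with no cycle-like structure is exactly why no such construction is known. So your proposal is an accurate map of the problem, including a correct identification of where the difficulty lives, but it is a research plan rather than a proof, and its concrete ingredients reduce to the already-published weaker bound.
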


Theorem~\ref{fact} also allows us to bound the number of 1-factors of a hypergraph in terms of the degrees of its vertices.

\begin{sled}
Let $G=(X,W)$ be a simple $d$-uniform hypergraph on $n$ vertices, and let the vertex $x_i \in X$ have degree $r_i$. Then the number of 1-factors of the hypergraph $G$ satisfies
$$\varphi(G) \leq \left(\frac{(d-1)!^n}{\mu(n,d)} \prod\limits_{i=1}^n r_i \right)^{1/d}.$$
\end{sled}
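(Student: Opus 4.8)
The plan is to combine Theorem~\ref{fact} with the trivial permanent bound of Proposition~\ref{triv}, so that the only real work is to count the ones in a hyperplane of the adjacency matrix $M(G)$ in terms of the vertex degrees.

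First I would fix a direction of $M(G)$, say the one obtained by fixing the first index, and examine the $i$th hyperplane, consisting of all entries $m_{i,\alpha_2,\ldots,\alpha_d}$. By the definition of the adjacency matrix, such an entry equals one exactly when $\{x_i,x_{\alpha_2},\ldots,x_{\alpha_d}\}$ is a hyperedge containing $x_i$. Since $x_i$ has degree $r_i$, there are $r_i$ hyperedges through $x_i$, and for each of them the remaining $d-1$ vertices can be assigned to the free indices $\alpha_2,\ldots,\alpha_d$ in any order; as these vertices are distinct (the hypergraph is simple), this yields exactly $(d-1)!$ ones per hyperedge. Hence the $i$th hyperplane contains exactly $r_i\,(d-1)!$ ones.

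Next I would apply Proposition~\ref{triv}, using $r_i\,(d-1)!$ as the bound on the number of ones in the $i$th hyperplane, which gives
$$\per M(G) \le \prod_{i=1}^n r_i\,(d-1)! = (d-1)!^{\,n}\prod_{i=1}^n r_i.$$
Substituting this into the bound $\varphi(G)\le(\per M(G)/\mu(n,d))^{1/d}$ of Theorem~\ref{fact} immediately yields the claimed inequality.

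I do not expect a genuine obstacle here: the statement is a direct consequence of the two cited results, and the only point demanding care is the combinatorial bookkeeping of the factor $(d-1)!$, which arises because a hyperedge is an unordered $d$-set whereas the indices of $M(G)$ are ordered. One should also note that the symmetry of $M(G)$ under permutation of its indices makes the choice of direction immaterial, so the count $r_i\,(d-1)!$ is the same in every direction.
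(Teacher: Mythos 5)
Your proposal is correct and follows exactly the paper's own argument: count the $r_i(d-1)!$ ones in the $i$th hyperplane, apply Proposition~\ref{triv} to get $\per M(G) \leq (d-1)!^n \prod_{i=1}^n r_i$, and substitute into Theorem~\ref{fact}. No differences worth noting.
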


\begin{proof}
Consider the $i$th hyperplane in $k$th direction of the adjacency matrix $M(G)$. By definition, the entries in that hyperplane are parameterized by $d$-tuples of vertices, where the $k$th position in the tuple is occupied by the $i$th vertex. Hence every hyperedge containing the $i$th vertex results in $(d-1)!$ ones in the $i$th hyperplane that correspond to $(d-1)!$ permutations of the remaining vertices of the hyperedge. Therefore the number of ones in $i$th hyperplane of some direction equals  $r_i(d-1)!$. Using Proposition~\ref{triv}, we obtain $\per M(G) \leq (d-1)!^n \prod\limits_{i=1}^n r_i$.
\end{proof}

Let us begin the proof of Theorem~\ref{fact}. For this purpose we need some auxiliary constructions.

Denote by  $\mathfrak{F}(G)$ the set of all ordered $d$-tuples of 1-factors of $G$, where $d$-tuples can contain identical 1-factors. It is clear that $|\mathfrak{F}(G)| = \varphi^d(G)$.

Let $f \in \mathfrak{F}(G)$ be an ordered $d$-tuple of 1-factors. Consider the $d$-uniform hypergraph $F$ on $n$ vertices such that its hyperedge set is exactly the set of all hyperedges of the $d$-tuple $f$, and the multiplicities of hyperedges from $F$ and $f$ are the same. By construction, $F$ is a 1-factorable $d$-uniform $d$-factor. Denote by $\Phi(F)$ the number of all 1-factorizations of $F$ (i.e., the number of all $d$-tuples $f \in \mathfrak{F}(G)$ corresponding to $F$).

Let $w$ be a hyperedge of a hypergraph $G$. An arbitrary ordering of vertices of a hyperedge $w$ is said to be an \textit{orientation} of $w$.  

An \textit{orientation} of a hypergraph $G$ is the set of orientations of all its hyperedges, where each copy of multiple hyperedges is oriented separately. A \textit{proper orientation} of a hypergraph $G$ is an orientation such that there are no vertices having the same position in different orientations of hyperedges. Let $\delta(G)$ be the set of all proper orientations of $G$, and let $\Delta(G) = |\delta(G)|$ be the cardinality of this set.

It is not hard to prove that in case $d=2$ the set of proper orientations $\delta(F)$ of a 1-factorable 2-uniform 2-factor $F$ is not empty. Indeed, in this case the graph $F$ is a union of even cycles. Choose a tour in each cycle, and orient edges according to the tours. It can be checked that such orientation is proper. Later, we show that each  1-factorable $d$-uniform $d$-factor $F$ has a proper orientation.

Let $F_1$ and $F_2$ be 1-factorable $d$-uniform $d$-factors. Note that if the hyperedge sets of $F_1$ and $F_2$ are the same (taking into account the multiplicity of hyperedges), then $\delta(F_1)=\delta(F_2)$. If the hyperedge sets of $F_1$ and $F_2$ are different, then all orientations of $F_1$ and $F_2$ will be different too, and  $\delta(F_1) \cap \delta(F_2) = \emptyset.$

Therefore all $d$-tuples from  $\mathfrak{F}(G)$  can be divided into classes such that $d$-tuples from one class induce the same $d$-uniform $d$-factor $F$, the cardinality of each class equals $\Phi(F)$, and the sets of proper orientations for different classes are disjoint.

Now we give the key statement for the proof of Theorem~\ref{fact}: 

\begin{utv} \label{base}
Let $F$ be a 1-factorable $d$-uniform $d$-factor. Then
 $$\Phi(F) \leq \frac{\Delta(F)}{\mu(n,d)}.$$
\end{utv}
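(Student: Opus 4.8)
The plan is to express both sides as colouring counts of two different auxiliary objects attached to $F$: bound $\Delta(F)$ from below by Schrijver's Theorem~\ref{tchriv} and bound $\Phi(F)$ from above by an entropy (fractional clique-cover) argument, and then multiply the two estimates.

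First I would reinterpret proper orientations. Form the bipartite incidence graph $B(F)$, one part being the $n$ vertices of $F$ and the other the $n$ hyperedges, with an edge joining $x$ and $w$ whenever $x\in w$. Since $F$ is $d$-uniform and every vertex has degree $d$, the graph $B(F)$ is $d$-regular on $2n$ vertices. Assigning to each incidence $(x,w)$ the position that $x$ occupies in the orientation of $w$ converts a proper orientation into a proper $d$-edge-colouring of $B(F)$: that the entries of one hyperedge receive distinct positions is properness at the hyperedge-nodes, and that no vertex occupies the same position in two hyperedges is properness at the vertex-nodes. This correspondence is a bijection, so $\Delta(F)$ equals the number of (ordered) $1$-factorizations of $B(F)$, and Theorem~\ref{tchriv} yields
$$\Delta(F)\ \ge\ \left(\frac{d!^{2}}{d^{d}}\right)^{n}.$$

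Next I would bound $\Phi(F)$. A $1$-factorization of $F$ is exactly a colouring of the $n$ hyperedges by $d$ colours in which the $d$ hyperedges through any fixed vertex get $d$ distinct colours (each colour class is then automatically a $1$-factor). For a uniformly random such colouring $c$ I would invoke Shearer's entropy inequality with the covering family of the $n$ \emph{stars} $S_x=\{w:x\in w\}$, one per vertex. Every hyperedge lies in exactly $d$ stars, so the covering multiplicity is $d$ and Shearer gives $H(c)\le \tfrac1d\sum_{x}H\!\left(c|_{S_x}\right)$. Each restriction $c|_{S_x}$ is a bijection between the $d$ hyperedges at $x$ and the $d$ colours, hence takes at most $d!$ values, so $H(c|_{S_x})\le\log d!$. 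Since $F$ is $1$-factorable the distribution is nonempty and $H(c)=\log\Phi(F)$, whence $\log\Phi(F)\le \tfrac nd\log d!$, that is $\Phi(F)\le d!^{\,n/d}$.

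Combining the two estimates, for $d\ge 4$ one obtains
$$\mu(n,d)\,\Phi(F)\ \le\ \left(\frac{d!^{2}}{d^{d}\,d!^{1/d}}\right)^{n}d!^{\,n/d}\ =\ \left(\frac{d!^{2}}{d^{d}}\right)^{n}\ \le\ \Delta(F),$$
which is the assertion. The case $d=2$ is immediate, since $\mu(n,2)=1$ and the description of $F$ as a disjoint union of even cycles makes both $\Delta(F)$ and $\Phi(F)$ equal to $2^{c}$, where $c$ is the number of cycles. The main obstacle is $d=3$: the target constant $\mu(n,3)=\left(\tfrac{2^{3/2}}{3}\right)^{n}$ is strictly larger than what this split delivers, because the general recipe only gives $\Phi(F)\le 3!^{\,n/3}$ and $\Delta(F)\ge(4/3)^{n}$, yielding the weaker constant $\big(3!^{2}/(3^{3}\,3!^{1/3})\big)^{n}$. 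To reach the stated bound one must sharpen one of the two estimates for cubic structures — most plausibly strengthening $\Phi(F)\le 3!^{\,n/3}$ to $\Phi(F)\le 2^{\,n/2}$, or improving the lower bound on the number of proper $3$-edge-colourings of a cubic bipartite graph beyond Schrijver's general estimate — and it is this cubic refinement that I expect to be the delicate point.
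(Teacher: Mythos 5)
For $d\ge 4$ (and $d=2$) your argument is sound and is essentially the paper's own: the paper likewise passes to the bipartite representation $B(F)$, identifies proper orientations with proper $d$-edge-colourings of $B(F)$, bounds the number of such colourings from below by Schrijver's Theorem~\ref{tchriv}, and bounds the number of ``proper decompositions'' of the hyperedge side from above by $d!^{n/d}$ --- by a greedy count ($d$ choices per vertex of the first class, $d-1$ per vertex of the second, etc.) rather than by Shearer, but with the identical constant. One small inaccuracy: when $F$ has multiple hyperedges the correspondence between proper orientations and proper edge colourings is not a bijection but $R(F)$-to-one, where $R(F)$ is the product of factorials of the multiplicities; the paper records $\Delta(F)=P(B)/R(F)$ and $\Phi(F)=T(B)/R(F)$ so that the factor cancels in the ratio. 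Your phrasing ``this correspondence is a bijection'' should be repaired accordingly, though it does not break the final inequality.

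The genuine gap is $d=3$, which you explicitly leave open; since the proposition asserts the bound with $\mu(n,3)=\left(\frac{2^{3/2}}{3}\right)^{n}$, your proof as written is incomplete. The paper closes the case exactly along the line you conjecture, namely by sharpening the upper bound on the number of decompositions to roughly $2^{n/2}$. It first reduces to \emph{connected} $F$ (legitimate because $\Phi$, $\Delta$ and $\mu(\cdot,d)$ are all multiplicative over connected components), and then builds the three classes $Y_1,Y_2,Y_3$ \emph{simultaneously}: by connectivity one can always pick a vertex $x\in X$ currently covered by exactly one class, and completing its cover costs a factor of only $2$ while adding at least two new vertices to $Y_1\cup Y_2\cup Y_3$; all subsequent completions at doubly covered vertices of $X$ are forced. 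This yields $T(B)\le 3\cdot 2^{n/2-1}$. Combined with the slightly improved lower bound $P(B)\ge 2\left(\frac{4}{3}\right)^{n}$ (Theorem~\ref{nper} together with the observation that the permanent of a $2$-regular $(0,1)$-matrix is at least $2$), one gets $T(B)/P(B)\le \frac{3^{n+1}}{2^{3n/2+2}}<\left(\frac{3}{2^{3/2}}\right)^{n}=\frac{1}{\mu(n,3)}$. So the missing ingredient in your write-up is precisely the use of connectedness in the cubic case; without it the stated constant for $d=3$ is not reached.
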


Using this proposition, it is quite easy to prove Theorem~\ref{fact}.

Let $G$ be a simple $d$-uniform hypergraph. Put $\gamma(G) = \bigcup \delta(F)$, where the union is over all $d$-uniform $d$-factors $F$ constructed by all $f \in \mathfrak{F}(G)$. Note that the set of entries of the adjacency matrix $M(G)$, whose indices make a proper orientation from $\gamma(G)$, forms a unity diagonal in $M(G)$. Consequently,  $|\gamma(G)| \leq \per M(G).$ The following is a simple corollary to Proposition~\ref{base}:

\begin{sled} \label{factdiag}
Let $G$ be a simple $d$-uniform hypergraph on $n$ vertices. Then
$$|\mathfrak{F}(G)| \leq \frac{|\gamma(G)|}{\mu(n,d)}.$$
\end{sled}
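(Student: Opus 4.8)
The plan is to exploit the partition of $\mathfrak{F}(G)$ into classes that has already been set up, reduce the estimate to a class-by-class application of Proposition~\ref{base}, and then sum the resulting inequalities. Concretely, I would first record that every ordered $d$-tuple $f \in \mathfrak{F}(G)$ induces a unique 1-factorable $d$-uniform $d$-factor $F$, and that the fibre over a fixed $F$ consists of exactly $\Phi(F)$ such tuples. Since these fibres are disjoint and cover all of $\mathfrak{F}(G)$, we obtain $|\mathfrak{F}(G)| = \sum_F \Phi(F)$, where the sum ranges over the distinct $d$-factors $F$ arising from tuples in $\mathfrak{F}(G)$.

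Next I would apply Proposition~\ref{base} to each such $F$, which gives $\Phi(F) \le \Delta(F)/\mu(n,d)$. Substituting this bound into the sum and factoring out the common denominator yields
$$|\mathfrak{F}(G)| = \sum_F \Phi(F) \le \frac{1}{\mu(n,d)} \sum_F \Delta(F).$$
It remains to identify $\sum_F \Delta(F)$ with $|\gamma(G)|$. Here I would use that $\Delta(F) = |\delta(F)|$ together with the earlier observation that the proper orientation sets $\delta(F)$ attached to distinct $d$-factors $F$ are pairwise disjoint. Disjointness turns the sum of cardinalities into the cardinality of the union, so $\sum_F \Delta(F) = \bigl|\bigcup_F \delta(F)\bigr| = |\gamma(G)|$, and the claimed inequality follows at once.

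The only point requiring genuine care — and the closest thing to an obstacle in an otherwise immediate argument — is precisely this last passage from $\sum_F \Delta(F)$ to $|\gamma(G)|$: it is an equality, not merely an inequality, and this hinges on the disjointness of the orientation sets, which in turn rests on the earlier fact that $d$-factors with different hyperedge multisets share no orientation. Symmetrically, the decomposition $|\mathfrak{F}(G)| = \sum_F \Phi(F)$ is legitimate only because each tuple $f$ determines its induced $F$ uniquely, so the classes genuinely partition $\mathfrak{F}(G)$. Once these two bookkeeping facts are in place, the corollary is a direct consequence of Proposition~\ref{base}; all the real work is hidden in that proposition rather than in the aggregation performed here.
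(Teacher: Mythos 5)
Your proof is correct and follows essentially the same route as the paper: both partition $\mathfrak{F}(G)$ into classes indexed by the induced $d$-factors $F$, apply Proposition~\ref{base} class by class, and use the disjointness of the sets $\delta(F)$ to identify $\sum_F \Delta(F)$ with $|\gamma(G)|$. You merely make explicit the summation that the paper compresses into the phrase ``the analogous inequality holds for the cardinalities.''
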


\begin{proof}
As before, the sets  $\mathfrak{F}(G)$ of $d$-tuples and  $\gamma(G)$ of proper orientations can be simultaneously partitioned into disjoint classes, and there exists a unique $d$-uniform $d$-factor $F$ for each class. Consequently if $\Phi(F) \leq \frac{\Delta(F)}{\mu(n,d)}$  for all 1-factorable $d$-uniform $d$-factors $F$, then the analogous inequality holds for the cardinalities of  $\mathfrak{F}(G)$  and $\gamma(G)$:  $$|\mathfrak{F}(G)| \leq \frac{|\gamma(G)|}{\mu(n,d)}.$$
\end{proof}

\begin{proof}[Proof of Theorem \ref{fact}]
Recall that  $\varphi^d(G) = |\mathfrak{F}(G)|.$ 
By Corollary~\ref{factdiag},  $|\mathfrak{F}(G)| \leq \frac{|\gamma(G)|}{\mu(n,d)}$. Also we know that $|\gamma(G)|$ is not greater than the permanent of the adjacency matrix $M(G)$. Therefore,
$$\varphi(G) \leq \left(\frac{\per M(G)}{\mu(n,d)} \right)^{1/d}.$$
\end{proof}

Let us begin the proof of Proposition~\ref{base} now. We show firstly that it is sufficient to consider only connected hypergraphs $F$.

\begin{lemma}
Suppose that for all connected 1-factorable $d$-uniform $d$-factors $F$ on $n$ vertices we have $\Phi(F) \leq \frac{\Delta(F)}{\mu(n,d)}$. Then this inequality holds for disconnected hypergraphs too.
\end{lemma}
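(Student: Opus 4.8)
The plan is to reduce the disconnected case to the connected case by decomposing $F$ into its connected components and verifying that both sides of the desired inequality factor as products over these components, after which the constant $\mu(n,d)$ multiplies correctly. Write $F = F_1 \sqcup \cdots \sqcup F_m$ for the connected components, where $F_j$ lives on a vertex set of size $n_j$ and $\sum_{j=1}^m n_j = n$. First I would check that each $F_j$ is again a connected 1-factorable $d$-uniform $d$-factor: since distinct components share no vertex, every hyperedge through a vertex $x$ lies in the component of $x$, so each vertex keeps degree $d$ inside its own component; and restricting any 1-factorization of $F$ to the vertex set of $F_j$ yields a 1-factorization of $F_j$, so each $F_j$ is 1-factorable.

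Next I would establish the two multiplicativity identities $\Phi(F) = \prod_{j=1}^m \Phi(F_j)$ and $\Delta(F) = \prod_{j=1}^m \Delta(F_j)$. For $\Phi$, since a 1-factorization of a $d$-factor is an ordered $d$-tuple $(L_1, \ldots, L_d)$ of 1-factors, restricting each $L_k$ to the component $F_j$ gives an ordered 1-factorization of $F_j$; conversely, combining one ordered factorization from each component index-by-index (the $k$th factor of $F$ being the union of the $k$th factors of the components) reconstructs a factorization of $F$. These are mutually inverse, hence a bijection, because components share no hyperedge and the global ordering both determines and is determined by the componentwise orderings. For $\Delta$, the same disjointness of vertex sets means the properness condition — no vertex occupying the same position in two oriented hyperedges — decouples completely across components, so proper orientations of $F$ correspond bijectively to tuples of proper orientations of the $F_j$.

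Finally I would use that $\mu(\cdot, d)$ is purely exponential in its first argument: in each of the three regimes $\mu(n,d) = c(d)^n$ with $c(d)$ independent of $n$ (namely $c(2)=1$, $c(3)=2^{3/2}/3$, and $c(d)=d!^2/(d^d\, d!^{1/d})$ for $d \geq 4$). Hence $\prod_{j=1}^m \mu(n_j,d) = c(d)^{\sum_j n_j} = c(d)^n = \mu(n,d)$. Applying the connected-case hypothesis $\Phi(F_j) \leq \Delta(F_j)/\mu(n_j,d)$ to each component and taking the product gives
$$\Phi(F) = \prod_{j=1}^m \Phi(F_j) \leq \prod_{j=1}^m \frac{\Delta(F_j)}{\mu(n_j,d)} = \frac{\prod_{j=1}^m \Delta(F_j)}{\prod_{j=1}^m \mu(n_j,d)} = \frac{\Delta(F)}{\mu(n,d)},$$
which is exactly the claim.

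The step I expect to be most delicate is the clean bijectivity underlying $\Phi(F) = \prod_j \Phi(F_j)$, specifically the handling of the factor-ordering: one must verify that the label $k$ of the $k$th 1-factor is shared coherently across all components, so that no spurious combinatorial factor (such as independent permutations of the factor-labels in each component) appears. It is precisely the fact that a 1-factorization is defined as an \emph{ordered} partition that makes this product exact. The corresponding identity for $\Delta$ and the exact multiplicativity of $\mu$ are then routine.
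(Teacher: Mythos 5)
Your proposal is correct and follows essentially the same route as the paper: decompose into connected components, use multiplicativity of $\Phi$, $\Delta$, and $\mu(\cdot,d)$ across components, and multiply the componentwise inequalities. The paper simply asserts that the components "can be independently 1-factorized and oriented," whereas you spell out the bijections and the role of the ordering of the 1-factors; this is additional care, not a different argument.
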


\begin{proof}
Let $F_1, \ldots, F_k$ be all connected components of the hypergraph $F$: $F = F_1 \cup \ldots \cup F_k$.
Since $F$ is 1-factorable, the number of vertices in each component is a multiple of $d$. Denote these numbers by $n_1, \ldots, n_k$, $n_1 + \ldots + n_k = n.$ The hypergraphs $F_1, \ldots, F_k$ can be independently 1-factorized and oriented. Hence, $\Phi(F) = \Phi(F_1) \cdot \ldots \cdot \Phi(F_k)$ and $\Delta(F) = \Delta(F_1) \cdot \ldots \cdot \Delta(F_k)$. 

Suppose that for all $i \in \left\{1, \ldots, k \right\}$ it holds $\Phi(F_i) \leq \frac{\Delta(F_i)}{\mu(n_i,d)}$. Note that for the function $\mu(n,d)$ we have
$$\mu(n,d) = \mu(n_1,d) \cdot \ldots \cdot \mu(n_k,d).$$
Therefore,
 $$\Phi(F) \leq \frac{\Delta(F)}{\mu(n,d)}.$$
\end{proof}

Before proving Proposition~\ref{base}, we consider a simpler case when $F$ is a graph ($d=2$). In this case a connected 1-factorable 2-uniform 2-factor $F$ is an even cycle.  If $F$ has more than two vertices, then it has two 1-factorizations. Also, there are two possible proper orientations of edges. If $F$ has two vertices, then there exists a unique 1-factorization of $F$, and $F$ has only one proper orientation. Therefore if $F$ is a graph, then $\Phi(F) =\Delta(F)$.

To prove Proposition~\ref{base} we use the concept of bipartite representation of a hypergraph. For a hypergraph $G=(X,W)$, the \textit{bipartite representation} of $G$ is the bipartite graph $B(G)=(X,W;E)$  with the vertex set $X\cup W$, and $E$ is the edge set; the vertex $x \in X$ is adjacent to the vertex $w \in W$ in $B(G)$ if
and only if the vertex $x$ is incident to the edge $w$ in $G$. Note that the biadjacency matrix of $B(G)$ coincides with the incidence matrix of $G$. Also, if $G$ is a connected hypergraph, then its bipartite representation $B(G)$ is connected too.

Any bipartite graph can be considered as a bipartite representation of some hypergraph.  If $G$ is a $d$-uniform $d$-factor, then its bipartite representation $B(G)$ is a $d$-regular graph, and each row and each column of the adjacency matrix of $B(G)$ contains $d$ ones. 

Now we associate the numbers of 1-factorizations and proper orientations of a $d$-uniform $d$-factor $F$ with the numbers of some objects in its bipartite representation. For this purpose we need the following concepts.

Let $G=(V,E)$ be a graph. A \textit{proper edge coloring} with $k$ colors of the graph $G$ is an assignment of  ``colors'' to the edges of the graph so that no two adjacent edges have the same color. If $G$ is a $d$-regular bipartite graph, then each proper edge coloring of $G$ with $d$ colors  is equivalent to some 1-factorization of $G$.

Let  $B=(X,Y; E)$ be a $d$-regular bipartite graph with the parts $X$ and $Y$ such that $|X|=|Y|=n$, and let $d$ divide $n$. The \textit{proper decomposition} of the part $Y$ is a decomposition of $Y$ into disjoint 
subsets $Y_1, \ldots, Y_d$ such that the neighborhood of each $Y_i$ (i.e., the union of neighborhoods of $y$ over all $y \in Y_i$) is equal to $X$. In other words, each vertex $x \in X$ is adjacent to exactly one vertex from each $Y_i$. 

Recall that the $d$-uniform $d$-factor $F$ in Proposition~\ref{base} may contain multiple hyperedges that correspond  to the vertices $w \in W$ with identical neighborhoods in  $B(F)$. Suppose that there are $k$ different hyperedges in the hypergraph $F$, and let the $i$th hyperedge have the multiplicity  $l_i$, $i=1,\ldots, k.$ Let $R(F) = \prod\limits_{i=1}^k l_i!.$ 

Next we associate the number of proper orientations of $F$ with the number of proper edge colorings of $B(F)$:

\begin{lemma} \label{plem}
Suppose $F$ is a 1-factorable $d$-uniform $d$-factor and $B(F)$ is its bipartite representation. Denote by $P(B)$ the number of proper edge colorings of $B(F)$ with $d$ colors. Then
$$\Delta(F) = P(B) / R(F).$$
\end{lemma}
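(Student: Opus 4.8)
The plan is to set up an explicit many-to-one correspondence between proper edge colorings of $B(F)$ and proper orientations of $F$, and to identify the overcounting factor as exactly $R(F)$. First I would observe that an orientation of a single hyperedge $w$ — an ordering of its $d$ vertices — is the same datum as a bijection from the vertices of $w$ to the color set $\{1,\ldots,d\}$, which is in turn exactly a proper $d$-coloring of the $d$ edges of $B(F)$ incident to the $W$-vertex representing $w$: one colors the edge joining $x\in w$ to $w$ by the position that $x$ occupies in the orientation of $w$. Since an ordering assigns distinct positions to distinct vertices, this coloring is automatically proper at every $W$-vertex, for every orientation.

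Next I would check that a global orientation of $F$ is proper precisely when the associated coloring is proper on the $X$-side as well. Since $F$ is a $d$-factor, each vertex $x$ lies in exactly $d$ hyperedges (copies counted), and these correspond to the $d$ edges of $B(F)$ incident to $x$. The coloring assigns to each such edge the position of $x$ in the corresponding oriented hyperedge, so it is proper at $x$ if and only if $x$ receives $d$ distinct positions, that is, no two of the oriented hyperedges containing $x$ place $x$ in the same position. By the definition of a proper orientation this holds at every vertex exactly when the orientation is proper. Hence proper orientations of $F$ correspond to proper $d$-edge-colorings of $B(F)$, and every such coloring restricts to a proper orientation.

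The remaining point — the one requiring care — is the effect of multiple hyperedges. In $B(F)$ the $l_i$ copies of the $i$th distinct hyperedge are $l_i$ distinct $W$-vertices, whereas a proper orientation of $F$ records only the unordered set of oriented copies. I would first note that in any proper coloring the $l_i$ copies of one hyperedge necessarily receive pairwise distinct orientations: two copies share all their vertices, and for a common vertex $x$ the two edges joining $x$ to the two copies meet at $x$ and so get different colors, placing $x$ in different positions. Thus a proper coloring determines, for each distinct hyperedge, an ordered $l_i$-tuple of distinct orientations of its labeled copies, while the induced proper orientation remembers only the corresponding set. The fibre of the forgetful map over a fixed proper orientation is therefore exactly the set of bijective matchings of the $l_i$ distinct orientations to the $l_i$ labeled copies, for each $i$, which has cardinality $\prod_{i=1}^k l_i! = R(F)$; distinct proper orientations have disjoint fibres. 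Hence the map from proper colorings to proper orientations is surjective and exactly $R(F)$-to-one, giving $P(B) = R(F)\,\Delta(F)$, i.e. $\Delta(F) = P(B)/R(F)$.

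The main obstacle is precisely this multiplicity bookkeeping: one must verify that copies of a hyperedge are \emph{forced} to be oriented differently, so that the fibre genuinely consists of bijections of size $l_i!$ rather than a larger count, and that passing from labeled copies in $B(F)$ to the unordered oriented set in the definition of a proper orientation introduces no further identifications or collapses.
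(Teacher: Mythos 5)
Your proposal is correct and follows essentially the same route as the paper: the same color-equals-position correspondence between proper edge colorings of $B(F)$ and orientations of $F$, with the factor $R(F)$ arising from permuting the labeled copies of multiple hyperedges. You are in fact more careful than the paper at the one delicate point, namely that a proper coloring forces the $l_i$ copies of a hyperedge to receive pairwise distinct orientations, so that each fibre has size exactly $\prod_i l_i!$.
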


\begin{proof}
The correspondence between the proper edge colorings of $B(F)$ with $d$ colors and the proper orientations of $F$ can be given by the following rule: if $x \in X$ and $w \in W$ are connected by the edge with color $i$ in the graph $B(F)$, then the position of $x$ in the orientation of a hyperedge $w$ in the hypergraph $F$ equals $i$. Different proper edge colorings of $B(F)$ may correspond to the same  orientation of $F$ if and only if $F$ has multiple hyperedges, and such edge colorings can be changed from one to another by permutations of labels of vertices $w \in W$ with the same neighborhoods. So each proper orientation is counted  $R(F)$  times, and   $\Delta(F) = P(B)/R(F).$
\end{proof}

By Hall's marriage theorem,  every  bipartite $d$-regular graph has a proper edge coloring with $d$ colors. Therefore we have

\begin{sled}
Every 1-factorable $d$-uniform $d$-factor $F$ has a proper orientation.
\end{sled}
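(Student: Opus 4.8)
The plan is to deduce the existence of a proper orientation of $F$ directly from Lemma~\ref{plem}, by translating the question into the language of edge colorings of the bipartite representation $B(F)$ and then invoking a classical consequence of Hall's theorem. The point of Lemma~\ref{plem} is precisely that $\Delta(F) = P(B)/R(F)$, so it suffices to show that $P(B) \geq 1$, i.e.\ that $B(F)$ admits at least one proper edge coloring with $d$ colors.

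First I would recall that, as noted earlier in the text, the bipartite representation $B(F)$ of a $d$-uniform $d$-factor is a $d$-regular bipartite graph: every vertex of $F$ has degree $d$ since $F$ is a $d$-factor, and every hyperedge meets exactly $d$ vertices since $F$ is $d$-uniform, which also forces the number of hyperedges to equal $n$, so the two parts $X$ and $W$ have the same size. Next I would establish that a $d$-regular bipartite graph always has a proper $d$-edge-coloring. This is the classical fact flagged in the sentence preceding the statement: by Hall's marriage theorem a $d$-regular bipartite graph satisfies Hall's condition and hence contains a perfect matching; removing it leaves a $(d-1)$-regular bipartite graph, and iterating this extraction $d$ times decomposes the edge set into $d$ perfect matchings. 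Assigning a distinct color to each matching yields a proper edge coloring with $d$ colors, so $P(B) \geq 1$.

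Finally I would combine the two ingredients. By Lemma~\ref{plem} we have $\Delta(F) = P(B)/R(F)$, where $R(F) = \prod_{i=1}^k l_i!$ is a finite positive integer. Since $P(B) \geq 1$ and $R(F) \geq 1$, it follows that $\Delta(F) > 0$. As $\Delta(F)$ is by definition the number of proper orientations of $F$, the hypergraph $F$ admits at least one proper orientation, which is exactly the assertion of the corollary.

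There is essentially no serious obstacle here: the statement is a packaging of Lemma~\ref{plem} together with a single standard input, namely that regular bipartite graphs are $\Delta$-edge-colorable. The only care needed is to make sure the regularity and balance of $B(F)$ are correctly justified so that Hall's theorem applies; once $P(B) \geq 1$ is in hand, positivity of $\Delta(F)$ is immediate from the identity in Lemma~\ref{plem}.
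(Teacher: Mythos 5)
Your proposal is correct and follows exactly the paper's route: the paper derives this corollary by observing (via Hall's marriage theorem) that the $d$-regular bipartite representation $B(F)$ admits a proper edge coloring with $d$ colors, and then invoking the correspondence of Lemma~\ref{plem} to conclude $\Delta(F)\geq 1$. You merely spell out the standard iterated-matching argument and the positivity of $R(F)$ in slightly more detail than the paper does.
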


Now let us associate the number of 1-factorizations of $F$ with the number of proper decompositions of $B(F)$:

\begin{lemma}
Let $F$ be a 1-factorable $d$-uniform $d$-factor, and let $B(F)$ be its bipartite representation. Denote by $T(B)$ the number of proper decompositions of the part $W$ of $B(F)$. Then
$$\Phi(F) = T(B)/R(F).$$
\end{lemma}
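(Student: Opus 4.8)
Looking at this lemma, I need to prove that $\Phi(F) = T(B)/R(F)$, establishing a correspondence between 1-factorizations of the hypergraph $F$ and proper decompositions of the part $W$ in its bipartite representation.

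Let me understand the objects. A 1-factorization of $F$ is an ordered partition of hyperedges into $d$ disjoint 1-factors (since $F$ is a $d$-uniform $d$-factor, each vertex has degree $d$, so each 1-factorization uses exactly $d$ factors). A proper decomposition of $W$ in $B(F)$ decomposes $W$ into $Y_1,\ldots,Y_d$ such that each vertex $x \in X$ is adjacent to exactly one vertex in each $Y_i$.

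The plan is to construct a correspondence analogous to the one in Lemma~\ref{plem}. Given an ordered 1-factorization $F = F_1 \cup \cdots \cup F_d$, the hyperedges in the $i$th 1-factor $F_i$ correspond to a subset $W_i \subseteq W$ in the bipartite representation. The key observation is that since $F_i$ is a 1-factor, every vertex $x \in X$ is incident to exactly one hyperedge from $F_i$, which translates in $B(F)$ to: every vertex $x \in X$ is adjacent to exactly one vertex of $W_i$. This is precisely the defining property of a proper decomposition with $Y_i = W_i$. So each ordered 1-factorization yields a proper decomposition, and conversely each proper decomposition $W_1,\ldots,W_d$ yields an ordered partition of the hyperedges where each $W_i$ induces a 1-factor.

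The main subtlety, exactly as in Lemma~\ref{plem}, is multiplicity. When $F$ has multiple hyperedges, the map from 1-factorizations to proper decompositions is not injective. A single proper decomposition of $W$ (viewed as an abstract decomposition of the vertex set of $B(F)$) can arise from several distinct 1-factorizations that differ only by permuting which copy of a multiple hyperedge is assigned to which factor. Specifically, if the $i$th distinct hyperedge has multiplicity $l_i$, then its $l_i$ copies can be freely permuted among the factors they occupy, and these permutations produce distinct 1-factorizations but the same decomposition of $W$. This overcounting factor is exactly $R(F) = \prod_{i=1}^k l_i!$, giving $\Phi(F) = T(B)/R(F)$.

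I would expect the hardest part to be the careful bookkeeping of the multiplicity argument: verifying that permuting copies of a repeated hyperedge among the 1-factors always yields a \emph{valid} 1-factorization (which holds because copies have identical neighborhoods, so swapping them preserves the 1-factor property in each factor), and confirming that the overcounting factor is uniformly $R(F)$ across all decompositions. The forward direction (each 1-factorization gives a proper decomposition) and the reverse construction are essentially definitional once one identifies ``1-factor'' with ``subset whose neighborhood covers $X$ exactly once.'' The proof should mirror the structure of Lemma~\ref{plem}, where the same factor $R(F)$ accounts for the identical-neighborhood vertices $w \in W$.
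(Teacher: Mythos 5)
Your correspondence is the same one the paper uses: the parts $W_1,\ldots,W_d$ of a proper decomposition are exactly the subsets of $W$ that cover each $x\in X$ once, i.e.\ the 1-factors, and the factor $R(F)$ accounts for multiple hyperedges, exactly as in Lemma~\ref{plem}. However, your multiplicity bookkeeping is stated in the wrong direction: you say the map from 1-factorizations to proper decompositions is not injective, and that a single proper decomposition can arise from several distinct 1-factorizations. The overcounting goes the other way. The elements of $W$ are distinct labelled vertices of $B(F)$ even when they have identical neighbourhoods, so permuting the $l_i$ copies of a repeated hyperedge among the parts they occupy produces $l_i!$ \emph{distinct proper decompositions}, all of which induce the \emph{same} 1-factorization of $F$, the copies being indistinguishable as hyperedges of the hypergraph. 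Taken literally, your stated direction would give $T(B)=\Phi(F)/R(F)$ rather than $T(B)=R(F)\,\Phi(F)$, contradicting the formula you then write down. Once that direction is corrected, the argument coincides with the paper's, which simply exhibits the rule ``$w\in W_i$ iff the hyperedge $w$ lies in the $i$th 1-factor'' and appeals to the same $R(F)$-to-one count as in Lemma~\ref{plem}.
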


\begin{proof}
The correspondence between the proper decompositions of the part $W$ of $B(F)$  and the 1-factorizations of $F$ can be given by the following rule: if  $w \in W$  belongs to the subset $W_i$ in a proper decomposition of $B(F)$, then the hyperedge $w$ belongs to the $i$th 1-factor in the 1-factorization of $F$. We get $\Phi(F) = T(B)/R(F)$ similarly to the proof of Lemma~\ref{plem}.
\end{proof}

Now to obtain Proposition~\ref{base} it is sufficient to prove the following lemma:

\begin{lemma} \label{dvud}
Let $B = (X,Y;E)$ be a  $d$-regular connected bipartite graph on $2n$ vertices, and let $d$ divide $n$. Then $T(B) \leq \frac{P(B)}{\mu(n,d)}$.
\end{lemma}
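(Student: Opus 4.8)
The plan is to prove the inequality by estimating the two quantities separately and matching them through the algebraic identity
$$\mu(n,d) = \left(\frac{d!^{\,2}}{d^{d}}\right)^{n} d!^{-n/d},$$
which holds for $d \geq 4$. First I would reinterpret both sides combinatorially. The number $P(B)$ of proper edge colorings of the $d$-regular bipartite graph $B$ with $d$ colors is exactly the number of (ordered) $1$-factorizations of $B$: in a $d$-regular graph each of the $d$ color classes of a proper $d$-edge-coloring meets every vertex exactly once, hence is a perfect matching. Thus Schrijver's Theorem~\ref{tchriv} applies and gives the lower bound $P(B) \geq (d!^{\,2}/d^{d})^{n}$ (a possible discrepancy between ordered and unordered factorizations only changes this by a harmless global factor). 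On the other side, a proper decomposition $Y = Y_1 \cup \cdots \cup Y_d$ is the same as a coloring $\chi\colon Y \to \{1,\dots,d\}$ for which every neighborhood $N(x)$, $x \in X$, receives all $d$ colors; here $|Y_i| = n/d$, since each of the $n$ vertices of $X$ sends one edge into $Y_i$ while every vertex of $Y_i$ has degree $d$.

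The core of the argument is then the upper bound $T(B) \leq d!^{\,n/d}$, which I would obtain by an entropy (Shearer) argument. Let $\chi$ be a uniformly random proper decomposition, so that $\log_2 T(B) = H(\chi)$. For each $x \in X$ the restriction $\chi|_{N(x)}$ is a bijection from the $d$-element set $N(x)$ onto the colors, so it takes at most $d!$ values and $H(\chi|_{N(x)}) \leq \log_2 d!$. Since $B$ is $d$-regular, every vertex of $Y$ lies in exactly $d$ of the neighborhoods $N(x)$, so Shearer's inequality with uniform covering multiplicity $d$ gives $d\,H(\chi) \leq \sum_{x} H(\chi|_{N(x)}) \leq n \log_2 d!$, whence $H(\chi) \leq (n/d)\log_2 d!$ and $T(B) \leq d!^{\,n/d}$. (In the spirit of the paper one could instead try to write $T(B)$ as a multidimensional permanent and invoke Proposition~\ref{triv} or Theorem~\ref{tdow}, but the Shearer bound is cleanest.) Combining the two estimates, for $d \geq 4$,
$$ T(B)\,\mu(n,d) \leq d!^{\,n/d}\left(\frac{d!^{\,2}}{d^{d} d!^{1/d}}\right)^{n} = \left(\frac{d!^{\,2}}{d^{d}}\right)^{n} \leq P(B), $$
which is exactly the claimed inequality.

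For small $d$ the generic estimate must be refined, and this is where I expect the real difficulty. When $d = 2$ the graph $B$ is a disjoint union of even cycles, and both $T(B)$ and $P(B)$ count the same alternating patterns, so $T(B) = P(B)$ and the statement holds with the stated $\mu(n,2) = 1$, which is sharper than the generic formula. The hard case is $d = 3$: the entropy bound yields only $T(B) \leq 6^{\,n/3}$, which together with $P(B) \geq (4/3)^{n}$ gives merely $\mu = (4/3)^{n} 6^{-n/3}$, weaker than the asserted $\mu(n,3) = (2^{3/2}/3)^{n}$. Matching the stated constant requires the sharper bound $T(B) \leq 2^{\,n/2}$, and I expect this to need genuine structural input: exploiting the connectivity hypothesis on $B$ (the three-dimensional analogue of the even-cycle analysis used for $d=2$) and possibly the Dow--Gibson permanent bound of Theorem~\ref{tdow}, for which the connectedness assumption in the statement of Lemma~\ref{dvud} is presumably there precisely to be used. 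Establishing this sharper $d=3$ bound is the main obstacle; the combination above is then routine.
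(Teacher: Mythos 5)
Your treatment of $d \geq 4$ is correct and follows essentially the same route as the paper: the same two one-sided estimates, $T(B) \leq d!^{n/d}$ and $P(B) \geq (d!^{2}/d^{d})^{n}$ via Schrijver's Theorem~\ref{tchriv}, combined through the identity $\mu(n,d) = (d!^{2}/d^{d})^{n} d!^{-n/d}$. The only difference is how you get $T(B) \leq d!^{n/d}$: you use Shearer's entropy inequality (each $y \in Y$ lies in exactly $d$ neighborhoods $N(x)$, each restriction $\chi|_{N(x)}$ takes at most $d!$ values), whereas the paper uses a greedy count (choose $Y_1$ in at most $d^{n/d}$ ways, delete it, choose $Y_2$ in at most $(d-1)^{n/d}$ ways in the resulting $(d-1)$-regular graph, and so on). Both are valid and give the identical bound; your version is arguably cleaner and does not even need connectivity for $d \geq 4$.

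However, there is a genuine gap: the lemma also asserts the inequality for $d = 3$ with the sharper constant $\mu(n,3) = (2^{3/2}/3)^{n}$, and you explicitly leave that case unproved, noting only that a bound of roughly $T(B) \leq 2^{n/2}$ would be needed. Since $\mu(n,3)$ is exactly the value used throughout Theorem~\ref{fact}, this case cannot be omitted. The paper's argument is a propagation process that does use connectivity: build $Y_1, Y_2, Y_3$ simultaneously, maintaining the invariant that no $x \in X$ is adjacent to exactly two chosen vertices. After an initial choice of $y_1^1$ (3 ways), at each step connectivity supplies a vertex $x'$ covered by exactly one chosen vertex; the two vertices completing its cover can be chosen in at most 2 ways, and then all vertices $x''$ that become covered exactly twice have their third covering vertex forced, so these completions are free. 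Each paid step enlarges $Y_1 \cup Y_2 \cup Y_3$ by at least 2, so there are at most $n/2 - 1$ paid steps and $T(B) \leq 3 \cdot 2^{n/2-1}$. This is slightly weaker than the $2^{n/2}$ you ask for, but the paper compensates on the other side by sharpening Schrijver's bound to $P(B) \geq 2\,(4/3)^{n}$ (after removing one perfect matching, the remaining 2-regular bipartite graph has permanent at least 2), and the resulting ratio $3^{n+1}/2^{3n/2+2}$ is strictly below $3^{n}/2^{3n/2} = 1/\mu(n,3)$. Without some such argument your proof establishes the lemma only for $d \neq 3$.
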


\begin{proof}
Firstly we prove the inequality for a simpler case $d\geq 4$. The case $d=3$ needs more precise estimations.

The first step is to find an upper bound on $T(B)$. For this purpose we estimate how many ways there exist to construct the subset $Y_1$ for a proper decomposition: a first vertex $x_1 \in X$ can be covered by any adjacent $y \in Y$. Hence the first vertex $y_1$ for $Y_1$ can be chosen by $d$ ways. Let  $x_2$ be a vertex that does not belong to the neighborhood of $y_1$. Then the number of vertices adjacent to $x_2$ is not greater than $d$, and there are at most $d$ ways to choose the second vertex $y_2$ for $Y_1$. Iterating this process, we see that there are at most $d^{n/d}$ ways to construct the set $Y_1$.

Delete all vertices $y_1, \ldots, y_{n/d}$ obtained at the previous step and all incident edges from the graph $B$. Then estimate analogously the number of ways to choose the set $Y_2$ for a proper decomposition in the remaining graph, and find that this number is at most $(d-1)^{n/d}$. By multiplying the estimations, we get that the number of proper decompositions of  the part $Y$ is not greater than  $d!^{n/d}$:
$$T(B) \leq d!^{n/d}.$$  

By Theorem~\ref{tchriv}, we have the number of proper edge colorings of $B$ with $d$ colors:
$$P(B) \geq \left(\frac{d!^2}{d^d}\right)^n. $$

Therefore,
$$\frac{T(B)}{P(B)} \leq \left( \frac{d!^{1/d}d^{d}}{d!^{2}} \right)^n = \frac{1}{\mu(n,d)}$$
for all $d$-regular connected bipartite graphs $B$ on $2n$ vertices.

Consider the case $d=3$. Now to estimate $T(B)$ we use the connectedness of the graph $B$. Let us construct subsets $Y_1, Y_2$, and $Y_3$ for a proper decomposition step by step.

Before the first step we choose a vertex $y^1_1$ for the subset $Y_1$ covering some vertex $x_1 \in X$. Note that this can be done in three ways.

Assume that after $k$ steps, $k \geq 0$,  we have $Y_i = \left\{y_i^1, \ldots, y_i^{m_i}\right\}$, $i \in \left\{1, 2,3\right\}$, where $m_1 + m_2 + m_3 \geq 2k+1,$ and there are no vertices $x \in X$ adjacent to exactly two vertices from $Y_1 \cup Y_2 \cup Y_3$.  
Since $B$ is connected, there exists a vertex $x' \in X$ such that $x'$ is covered by only one of vertices $Y_1 \cup Y_2 \cup Y_3$. Without loss of generality, let $x'$ be adjacent to some vertex from $Y_1$. Then there are 2 ways to find a vertex $y_2^{m_2+1}$ for the set $Y_2$ and a vertex $y_3^{m_3+1}$ for the set $Y_3$ covering the vertex $x'$.
Next if there exists, for example, a vertex $x''$ adjacent to some vertices $y_1^{j} \in Y_1$ and $y_2^{k} \in Y_2$ and non-adjacent to all vertices from $Y_3$, then we uniquely find the last vertex covering $x''$ and join it to the $Y_3$. Continue this process until every vertex $x \in X$ is adjacent to zero, one or three vertices from $Y_1 \cup Y_2 \cup Y_3$. 

Note that a final vertex in the construction of $Y_1$, $Y_2$, and $Y_3$ is defined uniquely. 
Since at each step a cardinality of $Y_1 \cup Y_2 \cup Y_3$ increases by at least 2, a total number of steps  does not exceed $\frac{n}{2} -1$. Therefore, we have the number of proper decompositions of the part $Y$: 
$$T(B) \leq 3 \cdot 2^{\frac{n}{2} -1}.$$

Let us estimate $P(B)$. Note that the permanent of a matrix with 2 ones in each row and column is not less than 2. Using this fact and Theorem~\ref{nper}, we obtain 
$$ P(B) \geq 2 \left(\frac{4}{3}\right)^n.$$
Then $$\frac{T(B)}{P(B)} \leq  \frac{3^{n+1}}{2^{3n/2 +2}} < \frac{3^n}{2^{3n/2}} = \frac{1}{\mu(n,3)}.$$
\end{proof}

Recall that Lemma~\ref{dvud} implies Proposition~\ref{base}, and the proof of Theorem~\ref{fact} is complete.

Next we prove the simple upper bound on the number of 1-factors of $k$-balanced $d$-partite  hypergraphs. But for the large number of vertices  this bound is weaker than Theorem~\ref{fact}. To state this result we need the concept of latin squares. 
 
A \textit{latin square of order $n$} is an $n \times n$ array of $n$ symbols, in which each symbol occurs exactly once in each row and each column. Denote by $L(n)$ the number of all latin squares of order $n$, and by $Q(n)$ the number of latin squares with the fixed filling of some column. Note that $L(n) =  Q(n)n!.$ 

Let us prove firstly the following lemma:
\begin{lemma} \label{latin}
Let $U(d)$ be a $d$-dimensional (0,1)-matrix of order $d$ such that $u_{\alpha} = 1$ if and only if all  $\alpha_1, \ldots, \alpha_d$  are different. Then the permanent of $U(d)$ is equal to the  number of latin squares with the fixed filling of one column:
$$\per U(d) = Q(d).$$  
\end{lemma}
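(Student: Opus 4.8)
The plan is to unwind the definition of the multidimensional permanent and to match its nonzero summands with Latin squares having a prescribed column. Applying the permutation formula for the permanent to the $d$-dimensional matrix $U(d)$ of order $d$ gives
$$\per U(d) = \sum_{\sigma_1, \ldots, \sigma_{d-1} \in S_d} \prod_{i=1}^d u_{i, \sigma_1(i), \ldots, \sigma_{d-1}(i)}.$$
By the definition of $U(d)$, the factor $u_{i, \sigma_1(i), \ldots, \sigma_{d-1}(i)}$ equals $1$ exactly when the $d$ entries $i, \sigma_1(i), \ldots, \sigma_{d-1}(i)$ are pairwise distinct, and equals $0$ otherwise. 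Hence the whole product is $1$ precisely for those tuples $(\sigma_1, \ldots, \sigma_{d-1})$ for which, for every row index $i$, the values $i, \sigma_1(i), \ldots, \sigma_{d-1}(i)$ form a permutation of $\{1, \ldots, d\}$, and every other tuple contributes $0$. Thus $\per U(d)$ counts exactly these ``admissible'' $(d-1)$-tuples of permutations.

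Next I would set up a bijection between admissible tuples and Latin squares of order $d$ whose first column is the identity. To an admissible tuple $(\sigma_1, \ldots, \sigma_{d-1})$ I associate the $d \times d$ array $L$ defined by $L(i,1) = i$ and $L(i, j+1) = \sigma_j(i)$ for $j = 1, \ldots, d-1$. Each column of $L$ is then a permutation of the symbols (the first column is the identity, and column $j+1$ is $\sigma_j \in S_d$), so each symbol occurs once per column automatically; the admissibility condition is exactly the requirement that each row of $L$ contain every symbol once. Hence $L$ is a Latin square with first column fixed. Conversely, any Latin square whose first column equals the identity has its remaining columns equal to permutations $\sigma_1, \ldots, \sigma_{d-1}$, and its row property is precisely admissibility, so this assignment is a genuine bijection.

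Finally, since $Q(d)$ is the number of Latin squares of order $d$ with one column filled in a prescribed way, and this count does not depend on the chosen column or on the prescribed filling (permuting the symbols and permuting the columns supply bijections between the corresponding sets), $Q(d)$ equals the number of Latin squares whose first column is the identity. Combining this with the bijection of the previous step yields $\per U(d) = Q(d)$. The argument is essentially bookkeeping; the one point that requires care, and the place I expect to have to be precise, is the verification that the correspondence is truly a bijection, namely that admissibility matches the Latin-square row condition exactly while the column condition is automatically satisfied.
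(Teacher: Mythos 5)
Your proof is correct and takes essentially the same approach as the paper: both identify a nonzero contribution to $\per U(d)$ with a $d\times d$ array whose rows are the index tuples, the unit-entry condition giving the Latin row property and the diagonal condition giving the column property. The only difference is bookkeeping --- the paper works with unordered diagonals and absorbs the $d!$ row orderings via $L(d)=d!\,Q(d)$, while you use the permutation formula for the permanent, which normalizes the first column to the identity from the outset.
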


\begin{proof}
Let the set of indices   $(\alpha^1,\ldots, \alpha^d)$ be a unity diagonal in the matrix $U$. Construct the $d \times d$ array $T$ such that  $t_{i,j} = \alpha^i_j.$ Since each $\alpha^i$ corresponds to a unit entry in the matrix $U(d)$, we have that each row of $T$ contains different elements. Since indices   $(\alpha^1,\ldots, \alpha^d)$ form a diagonal in $U$, we see that all elements in any column of $T$ are different.

Therefore the array  $T$ is a latin square of order $d$. Similarly, for each latin square of order $d$ we can construct the unity diagonal in $U(d)$. Note that permutations of rows of the square $T$ preserve the diagonal. Consequently the permanent of $U(d)$ is equal to the  number of latin squares with the fixed filling of one column.
\end{proof}

It is worth mentioning that the number of 1-factorizations of the complete bipartite graph $K_{d,d}$ is equal to the number of latin squares of order $d$ and equals  $ d! \cdot \per U(d).$

Using Lemma~\ref{latin}, let us estimate the number of 1-factors of a $k$-balanced $d$-partite  hypergraph: 

\begin{teorema}
Let $G$ be a simple $k$-balanced $d$-partite hypergraph. Then the number of 1-factors of $G$ satisfies
$$\varphi(G) \leq \left( \frac{\per M(G)}{Q(d)} \right)^{1/d}.$$
\end{teorema}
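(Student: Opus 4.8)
The plan is to run the architecture of the proof of Theorem~\ref{fact} essentially verbatim, replacing the quantity $\mu(n,d)$ by the constant $Q(d)=\per U(d)$ supplied by Lemma~\ref{latin}. Concretely, I would form the set $\mathfrak{F}(G)$ of ordered $d$-tuples of $1$-factors, partition it according to the induced $d$-uniform $d$-factor $F$, and observe that $F$ is again a \emph{balanced $d$-partite} $1$-factorable $d$-factor (each of its hyperedges is a hyperedge of $G$, hence has one vertex per part, and every vertex is covered $d$ times). Exactly as in Corollary~\ref{factdiag}, the proper orientations of these factors inject into the unity diagonals of $M(G)$, so $\sum_F\Delta(F)=|\gamma(G)|\le\per M(G)$. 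Thus everything reduces to the balanced $d$-partite analogue of Proposition~\ref{base}: for every such $F$ one should prove $\Phi(F)\le \Delta(F)/Q(d)$; summing over classes then gives $\varphi^d(G)=|\mathfrak{F}(G)|\le\per M(G)/Q(d)$.

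Passing to the bipartite representation $B=B(F)$ and invoking the correspondence lemmas, where $\Delta(F)=P(B)/R(F)$ (Lemma~\ref{plem}) and $\Phi(F)=T(B)/R(F)$, this inequality becomes $P(B)\ge Q(d)\,T(B)$. Here the $d$-partite structure is the whole point: writing $X=X_1\cup\ldots\cup X_d$ for the parts, each $w\in W$ has exactly one neighbour $f_j(w)\in X_j$, and since $F$ is a $d$-factor every $x\in X_j$ lies in exactly $d$ hyperedges, so each map $f_j\colon W\to X_j$ is $d$-to-$1$. In these terms a proper decomposition is a map $D\colon W\to\{1,\ldots,d\}$ whose restriction to each fibre $f_j^{-1}(x)$ is a bijection onto $\{1,\ldots,d\}$, and a proper edge colouring is a map $c(w,j)$ that is a permutation in the index $j$ for each fixed $w$ and bijective along each fibre $f_j^{-1}(x)$ for each fixed $j$.

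The key step, which I expect to be the heart of the matter, is an injection $(D,L)\mapsto c$ from (proper decompositions) $\times$ (latin squares of order $d$ whose first column is the identity permutation) into (proper edge colourings); by Lemma~\ref{latin} the second factor has exactly $Q(d)$ elements. Given such a pair, set $c(w,j)=L(D(w),j)$. For fixed $w$ the map $j\mapsto c(w,j)$ is a row of $L$, hence a permutation; and for fixed $j$ the restriction of $D$ to a fibre $f_j^{-1}(x)$ ranges bijectively over the rows of $L$, so the values $c(\cdot,j)$ run over column $j$ of $L$, again a permutation, so $c$ is proper. For injectivity I would read off the first column: since $L(a,1)=a$, evaluating at $j=1$ forces $D(w)=D'(w)$ for all $w$, and then surjectivity of $D$ (each single fibre already attains every colour) forces $L=L'$ on every row. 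This yields $P(B)\ge Q(d)\,T(B)$, hence $\Phi(F)\le\Delta(F)/Q(d)$, and after summing over the classes as above the claimed bound $\varphi(G)\le\left(\per M(G)/Q(d)\right)^{1/d}$.

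The main obstacle is confined to the third paragraph: one must manufacture $Q(d)$ genuinely distinct proper edge colourings from each proper decomposition with no double counting across decompositions, and it is precisely the \emph{simultaneous} $d$-to-$1$ fibre structure of all the maps $f_j$ — a feature of $d$-partiteness that is unavailable for general hypergraphs — that makes the recipe $c=L\circ D$ both well defined and injective. Note that no component-wise reduction is needed, since $Q(d)$ does not depend on $n$; this constancy is also the structural reason why the resulting bound is weaker than Theorem~\ref{fact} for large $n$.
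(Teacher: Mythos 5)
Your proof is correct, but it takes a genuinely different (if ultimately equivalent in spirit) route from the paper's. The paper does not pass through the bipartite representation or the machinery of Proposition~\ref{base} at all: it argues directly on the adjacency matrix, splitting $M(G)$ into blocks $v_\beta$ of order $k$ indexed by $d$-tuples of parts, noting that the block-support pattern is exactly $U(d)$, and showing that each ordered $d$-tuple $f\in\mathfrak{F}(G)$ of 1-factors, paired with any unity diagonal $(\beta^1,\ldots,\beta^d)$ of $U(d)$, yields a distinct unity diagonal of $M(G)$ (orient the hyperedges of the $i$th factor so that their entries form a partial diagonal of length $k$ in the block $v_{\beta^i}$); this gives $Q(d)\,|\mathfrak{F}(G)|\leq\per M(G)$ in one step via Lemma~\ref{latin}. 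Your injection $c(w,j)=L(D(w),j)$ is the same combinatorial engine transported into the language of $B(F)$: identifying unity diagonals of $U(d)$ with latin squares having a fixed first column, the row $L(D(w),\cdot)$ plays exactly the role of the paper's $\beta^{i}$ for the hyperedges of the $i$th factor. What your route buys is a carefully verified injectivity argument and a clean per-factor statement $\Delta(F)\geq Q(d)\,\Phi(F)$ analogous to Proposition~\ref{base} (together with the correct observation that no connectivity or component-wise reduction is needed because $Q(d)\geq 1$ is independent of $n$), whereas the paper's direct block argument is shorter and avoids $P(B)$, $T(B)$ and the multiplicity factor $R(F)$ entirely, at the cost of leaving the distinctness of the produced diagonals across different $d$-tuples more implicit.
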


\begin{proof}
Let us enumerate the parts of $G$ from 1 to $d$. Note that the adjacency matrix $M(G)$ can be split into blocks $v_\beta$ of order $k$, $\beta=(\beta_1, \ldots, \beta_d)$, $\beta_i \in \left\{1, \ldots, d\right\}$,  such that an entry $m_\alpha $ belongs to the block $v_\beta$ if and only if the vertex with label $\alpha_i$ belongs to the $\beta_i$th part of the hypergraph $G$. Moreover, all unit entries of $M(G)$ belong to blocks $v_\beta$ for which all $\beta_1, \ldots, \beta_d$ are different. Then the $d$-dimensional (0,1)-matrix of order $d$ such that $u_\beta=1$ if and only if the block $v_\beta$ contains ones coincides with $U(d)$. 

Consider a 1-factor of the hypergraph $G$. There exists an orientation of the 1-factor hyperedges  such that corresponding entries of $M(G)$ form a partial unity diagonal of length $k$ in any block $v_\beta$. If   $f \in \mathfrak{F}(G)$ is a $d$-tuple of 1-factors in $G$  and $(\beta^1, \ldots, \beta^d)$ is a unity diagonal in matrix $U(d)$, then  we can orientate properly hyperedges of the $i$th 1-factor of $f$ so that corresponding entries of $M(G)$ form a partial unity diagonal in block $v_{\beta^i}$. Also the union of all entries corresponding to hyperedges of $f$ is a unity diagonal of $M(G)$.  
So the number of proper orientations of each $d$-tuple $f$ is not less than the permanent of $U(d)$.

By Lemma~\ref{latin}, the permanent of $U(d)$ equals $Q(d)$. Consequently there are at least $Q(d)$ unity diagonals in $M(G)$ for each $d$-tuple  $f  \in \mathfrak{F}(G)$. This implies that $|\mathfrak{F}(G)| \leq \per M(G) /Q(d).$ Since $\varphi^d(G) =|\mathfrak{F}(G)|$, we obtain
$$\varphi(G) \leq \left(\frac{\per M(G)}{Q(d)} \right)^{1/d}.$$
\end{proof}

At the end of this section we consider the following examples that illustrate the tightness of Theorem~\ref{fact}.

\textbf{Example 1.} Let $G$ be a $d$-uniform hypergraph on $d$ vertices with single hyperedge, $d \geq 4$. It is obvious that $\varphi(G) = 1.$ An entry $m_\alpha$ of the adjacency matrix $M(G)$ equals 1 if and only if all components of index $\alpha$ are different, so $M(G) = U(d)$. By Lemma~\ref{latin}, we have that $\per M(G) = Q(d) = L(d)/ d!.$ The classical lower bound on the number of latin squares of order $d$ is
$$L(d) \geq \frac{d!^{2d}}{d^{d^2}}.$$

But 
$$\mu(d,d) = \frac{d!^{2d}}{d^{d^2}d!},$$
and a significant improvement of the function $\mu(n,d)$ in this case implies a similar improvement of the lower bound on the number of latin squares.

\textbf{Example 2.} Consider the complete $d$-uniform hypergraph $G^d_n$ on $n$ vertices, $d \geq 4$. The number of 1-factors of $G^d_n$ is equal to the number of unordered partitions of the vertex set into $n/d$ disjoint groups of size $d$:
$$\varphi(G^d_n) = \frac{1}{(n/d)!} {{n}\choose{d, \ldots, d}} = \frac{n!}{d!^{n/d} (n/d)!}.$$

With the help of Stirling's approximation we obtain
$$\varphi(G^d_n) = e^{o(n)} \left( \frac{1}{(d-1)!} \frac{n^{d-1}}{e^{d-1}}\right)^{n/d}  \mbox{ as } n \rightarrow \infty. $$

The permanent of the adjacency matrix $M(G^d_n)$ is not greater than the permanent of the $d$-dimensional matrix of order $n$ all of whose entries are equal to 1. Since the permanent of such matrix equals $n!^{d-1}$, Theorem~\ref{fact} implies
$$\varphi(G^d_n) \leq \left(\frac{\per M(G^d_n)}{\mu(n,d)}\right)^{1/d} \leq \left(\frac{d^{dn} d!^{n/d} }{d!^{2n}} n!^{d-1}\right)^{1/d}.$$

Using Stirling's approximation again, we estimate the number of 1-factors of $G^d_n$ as follows:
$$\varphi(G^d_n) \leq e^{o(n)} \left( \frac{d!^{1/d} d^d}{d!^{2}} \frac{n^{d-1}}{e^{d-1}}\right)^{n/d}  \mbox{ as } n \rightarrow \infty.$$

\section{An upper bound on the number of 1-factorizations of complete hypergraphs}

Denote by $G^d_n$ the complete $d$-uniform hypergraph on $n$ vertices, i.e., the hyperedge set of $G^d_n$ is the set of all $d$-element subsets of the vertex set. Let $M(G^d_n)$ be the adjacency matrix of this graph, and let $\Phi(n,d)$ be  the number of its 1-factorizations. It is easy to check that  each 1-factorization of $G^d_n$ consists of $t={{n-1}\choose{d-1}}$  1-factors.

Recall that if a hypergraph has a 1-factor, then $n$ is a multiple of $d$. By Baranyai's theorem~\cite{bar}, this condition is sufficient for the existence of a 1-factorization of complete hypergraphs. 

First we prove the following trivial bound on the number of 1-factorizations:
\begin{utv}
The number of 1-factorizations of the hypergraph $G^d_n$ satisfies
$$\Phi(n,d) \leq \left( (1+o(1)) \frac{n^{d-1}}{(d-1)!}\right)^{\frac{n^d}{d!}} \mbox{ as } n \rightarrow \infty.$$
\end{utv}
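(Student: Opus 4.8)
The plan is to read a 1-factorization of $G^d_n$ as nothing more than a coloring of its hyperedges by the labels of the 1-factors they lie in, and then to bound the number of such colorings by the completely naive count. Since the statement only asks for a clean weak bound, no structural information about which colorings are actually realizable is needed; an injection into a set of all functions is enough.

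First I would recall the two size parameters already established in the excerpt: every 1-factorization of $G^d_n$ consists of exactly $t=\binom{n-1}{d-1}$ 1-factors, while the hyperedge set $E$ of $G^d_n$ has $|E|=\binom{n}{d}$ elements. Given an ordered 1-factorization $(F_1,\ldots,F_t)$, assign to each hyperedge $w$ the index $i$ of the unique part $F_i$ containing it; because the $F_i$ partition $E$, this is a well-defined map $c\colon E\to\{1,\ldots,t\}$, and two distinct 1-factorizations differ in at least one part and hence give distinct maps. Thus the assignment $(F_1,\ldots,F_t)\mapsto c$ is injective into the set of all functions from $\binom{n}{d}$ hyperedges to $t$ labels, which yields the crude bound
$$\Phi(n,d)\leq t^{\binom{n}{d}}=\binom{n-1}{d-1}^{\binom{n}{d}}.$$

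It then remains to convert this into the claimed asymptotic form, using $\binom{n-1}{d-1}=(1+o(1))\frac{n^{d-1}}{(d-1)!}$ and $\binom{n}{d}=\frac{n^d}{d!}+O(n^{d-1})$ as $n\to\infty$ for fixed $d$. I would split the exponent as
$$t^{\binom{n}{d}}=t^{\,n^d/d!}\cdot t^{\,\binom{n}{d}-n^d/d!},$$
where the first factor is already $\bigl((1+o(1))\frac{n^{d-1}}{(d-1)!}\bigr)^{n^d/d!}$ after substituting the estimate for $t$. The only point requiring any care — and the (minor) main obstacle — is to verify that the leftover factor disappears into the $(1+o(1))$ in the base.

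To close that gap, note $\binom{n}{d}-n^d/d!=O(n^{d-1})$ while $\log t=O(\log n)$, so
$$t^{\,\binom{n}{d}-n^d/d!}=e^{O(n^{d-1}\log n)}=e^{o(n^d)},$$
and any factor of the shape $e^{o(n^d)}$ rewrites as $(1+o(1))^{n^d/d!}$ and merges into the base. Combining the two factors gives
$$\Phi(n,d)\leq\left((1+o(1))\frac{n^{d-1}}{(d-1)!}\right)^{\frac{n^d}{d!}},$$
as claimed. I would finally remark that a marginally less wasteful greedy count, bounding each successive 1-factor by the total number $\varphi(G^d_n)$ of 1-factors of $G^d_n$ computed in Example~2, even replaces the base by $\frac{n^{d-1}}{(d-1)!\,e^{d-1}}$; since only the weaker form is asserted here, the coloring bound above already suffices and is the simplest justification.
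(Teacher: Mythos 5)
Your proof is correct and is essentially the same argument as the paper's: the paper also bounds $\Phi(n,d)$ by the number of colorings of the hyperedges with $t$ colors (phrased there as colorings of the unit entries of the adjacency matrix, which come in classes of $d!$ identically colored entries, so at most $n^d/d!$ choices need to be specified). The only cosmetic difference is that the paper upper-bounds the number of hyperedges $\binom{n}{d}$ by $n^d/d!$ at the outset, whereas you keep $\binom{n}{d}$ and absorb the $e^{o(n^d)}$ discrepancy into the $(1+o(1))$ at the end; both steps are valid.
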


\begin{proof}
Let $\varphi_1, \ldots, \varphi_t$ be a 1-factorization of the hypergraph $G^d_n$. Construct the coloring of unit entries of the adjacency matrix  $M(G^d_n)$ with $t$ colors by the next rule: an entry $m_\alpha$ has the color $i$ if and only if the hyperedge $\alpha = (\alpha_1, \ldots, \alpha_d)$ belongs to the 1-factor $\varphi_i$. Note that if the entry $m_\alpha$ has the color $i$, then for all $\beta$, such that $(\beta_1, \ldots, \beta_d)$ is a permutation of  $(\alpha_1, \ldots, \alpha_d)$, the entry $m_\beta$ has the color $i$ too. Therefore, to define a coloring  of  all unit entries of the adjacency matrix, it is sufficient to specify colors for at most  $n^d/d!$ entries.

The number of colorings of $n^d/d!$ entries of the matrix $M(G^d_n)$ with $t$ colors is equal to  $t^\frac{n^d}{d!}$. Since  $t = {{n-1}\choose {d-1}} = (1+o(1))\frac{n^{d-1}}{(d-1)!}$, we have 
$$\Phi(n,d) \leq \left( (1+o(1)) \frac{n^{d-1}}{(d-1)!}\right)^{\frac{n^d}{d!}} \mbox{ as } n \rightarrow \infty.$$
\end{proof}

Our reasoning for the estimation of the number of 1-factorizations of the complete hypergraphs will be similar to the proof for the complete graphs, but instead of the result of~\cite{alon} and Bregman's theorem we use Theorem~\ref{fact} and the following result of~\cite{my}.

Let $r(n)$ be an $n$-vector $(r_1(n), \ldots, r_n(n)).$
Denote by $\Lambda^d(n,r(n))$  the set of $d$-dimensional (0,1)-matrices of order $n$ such that the number of ones in their hyperplanes is not greater than $r_i(n).$

\begin{teorema} \label{asym}
Assume that for given integer $d \geq 2$ and for all integer $n$ there are $n$ integer numbers  $r_1(n), \ldots ,r_n(n)$ such that $\min\limits_{i=1 \ldots n} r_i(n)/ n^{d-2} \rightarrow \infty$ as $n \rightarrow \infty.$ Let $S(x) = \left\lceil x\right\rceil!^{1/\left\lceil x\right\rceil}.$ Then
$$ \max\limits_{A \in \Lambda^d(n,r(n))} \per A \leq n!^{d-2} e^{o(n)} \prod \limits_{i=1}^n S\left(\frac{r_i(n)}{n^{d-2}}\right) \mbox{ as } n \rightarrow \infty.$$
\end{teorema}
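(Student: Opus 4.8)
The plan is to reduce the $d$-dimensional permanent to two-dimensional permanents, apply Bregman's theorem, and then estimate the resulting sum in the regime where the fibers are large on average. First I would fix the first $d-2$ of the $d-1$ permutations. Writing
$$\per A = \sum_{\sigma_1, \ldots, \sigma_{d-2} \in S_n} \sum_{\sigma_{d-1}\in S_n} \prod_{i=1}^n a_{i,\sigma_1(i),\ldots,\sigma_{d-2}(i),\sigma_{d-1}(i)},$$
the inner sum over $\sigma_{d-1}$ is precisely the two-dimensional permanent $\per C(\sigma_1,\ldots,\sigma_{d-2})$ of the $(0,1)$-matrix $C_{i,j} = a_{i,\sigma_1(i),\ldots,\sigma_{d-2}(i),j}$. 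The point of reducing all the way to dimension two in one step is that each such $C$ is a genuine $(0,1)$-matrix, a slice of $A$, so Bregman's theorem applies: with $g(m):=(m!)^{1/m}$ and with $\rho_i(\vec\sigma)$ denoting the $i$th row sum of $C$, which equals the number of ones in the fiber $(i,\sigma_1(i),\ldots,\sigma_{d-2}(i),*)$, one gets $\per C(\vec\sigma)\le\prod_{i=1}^n g\big(\rho_i(\vec\sigma)\big)$, whence
$$\per A \le \sum_{\sigma_1,\ldots,\sigma_{d-2}\in S_n} \prod_{i=1}^n g\big(\rho_i(\sigma_1,\ldots,\sigma_{d-2})\big).$$

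It then remains to prove the averaging estimate
$$\frac{1}{(n!)^{d-2}}\sum_{\sigma_1,\ldots,\sigma_{d-2}} \prod_{i=1}^n g\big(\rho_i(\vec\sigma)\big) \le e^{o(n)}\prod_{i=1}^n S\!\left(\frac{r_i(n)}{n^{d-2}}\right),$$
noting that there are exactly $(n!)^{d-2}$ summands, which is the source of the factor $n!^{d-2}$, and that $S(x)=g(\lceil x\rceil)$. The heuristic is transparent: for fixed $i$ the ones of the $i$th hyperplane, at most $r_i(n)$ of them, are distributed over the $n^{d-2}$ fibers indexed by $(\sigma_1(i),\ldots,\sigma_{d-2}(i))$, so the average fiber count is at most $r_i(n)/n^{d-2}$; since $g$ is concave (with the convention $g(0)=0$), Jensen's inequality applied fiberwise should replace each $\rho_i$ by its mean. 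The hypothesis $\min_i r_i(n)/n^{d-2}\to\infty$ makes these means diverge, so that the asymptotics $g(m)=(1+o(1))\,m/e$ are usable and all Stirling, ceiling and concavity corrections can be absorbed into $e^{o(n)}$.

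The main obstacle is exactly this last estimate, and its difficulty is a multidimensional Bregman phenomenon. If the $\sigma_j$ were arbitrary functions $[n]\to[n]$ rather than permutations, the sum would factor over $i$ and the concavity bound $\sum_{\vec k}g\big(R_i(\vec k)\big)\le n^{d-2}g\big(r_i(n)/n^{d-2}\big)$ would finish at once; but that relaxation overcounts by the factor $(n^n/n!)^{d-2}=e^{(d-2)n(1+o(1))}$, precisely the gap one must recover by insisting on permutations. Capturing this saving is the heart of the matter and cannot be done by crude counting: it requires a Bregman/Schrijver-strength estimate, for instance via the entropy method in the spirit of Radhakrishnan's proof of Bregman's theorem, adapted to the present sum. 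I would organize this as an induction on $d$ whose base case $d=2$ is Bregman's theorem itself and whose inductive step lowers the dimension by one, the new hyperplane bounds being $n^{d-2}g\big(r_i(n)/n^{d-2}\big)\approx r_i(n)/e$, which still grow faster than $n^{d-3}$ so that the induction hypothesis continues to apply and the accumulated factor $(n!/n^n)^{d-2}\approx e^{-(d-2)n}$ comes out exactly right. The large-hyperplane-sum regime is indispensable throughout: it is what linearizes $g$ and confines every approximation to $e^{o(n)}$.
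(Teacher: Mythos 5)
You have not actually proved the theorem; you have reduced it to an estimate that you then explicitly decline to establish. The first half of the argument is fine: expanding over $\sigma_1,\ldots,\sigma_{d-2}$ and applying Bregman's theorem to each two-dimensional $(0,1)$-slice legitimately gives $\per A \le \sum_{\vec\sigma}\prod_{i=1}^n g\bigl(\rho_i(\vec\sigma)\bigr)$. But the entire content of the theorem is the remaining averaging estimate $\sum_{\vec\sigma}\prod_i g(\rho_i(\vec\sigma)) \le n!^{\,d-2} e^{o(n)}\prod_i S(r_i(n)/n^{d-2})$, and you concede in your own words that the only argument you can execute (relaxing permutations to arbitrary functions and applying Jensen to the concave $g$) overshoots by a factor $e^{(d-2)n(1+o(1))}$, which is exponentially larger than the allowed $e^{o(n)}$. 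Saying that recovering this factor ``requires a Bregman/Schrijver-strength estimate, for instance via the entropy method'' is a restatement of the difficulty, not a resolution of it; note that this theorem is imported by the paper from a separate reference, so its proof is a genuine piece of work and not something that can be waved at.

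The fallback induction on $d$ that you sketch does not close the gap either, for a structural reason. If you collapse one direction by replacing fibers with their $g$-values, the resulting $(d-1)$-dimensional array has nonnegative real entries, so neither Bregman's theorem nor the theorem being proved (both stated for $(0,1)$-matrices) applies to it. If instead you fix one permutation $\sigma_{d-1}$ and recurse on the $(0,1)$-slices $A^{\sigma_{d-1}}$, the hyperplane sums of an individual slice are not controlled by $r_i(n)/n$; they are only controlled on average over the $n!$ choices of $\sigma_{d-1}$, and converting that average control into the product bound is exactly the same unproven concavity-over-permutations estimate you started with. Until that estimate is proved, the argument is a plan, not a proof.
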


Now we are ready to prove the main theorem of this section:

\begin{teorema}
The number of 1-factorizations of the complete $d$-uniform hypergraph $G^d_n$ on $n$ vertices satisfies
$$\Phi(n,d) \leq \left((1+o(1))\frac{ n^{d-1}}{\mu(n,d)^{1/n} e^d} \right)^{\frac{n^d}{d!}} \mbox{ as }n \rightarrow \infty.$$
\end{teorema}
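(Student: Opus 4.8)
The plan is to adapt the factor-by-factor counting that yields the upper bound for $K_n$, replacing the Alon--Friedland input by Theorem~\ref{fact} and Bregman's theorem by Theorem~\ref{asym}. Each $1$-factorization of $G^d_n$ is an ordered list $\varphi_1,\dots,\varphi_t$ of pairwise disjoint $1$-factors, where $t=\binom{n-1}{d-1}$. Once $\varphi_1,\dots,\varphi_{i-1}$ are fixed, the unused hyperedges form a simple $d$-uniform hypergraph $G_i$ in which every vertex has degree $s:=t-i+1$, no matter which factors were deleted, and $\varphi_i$ must be a $1$-factor of $G_i$; hence the number of choices at step $i$ is at most $\varphi(G_i)$. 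By Theorem~\ref{fact} together with the hyperplane count established earlier (each hyperplane of $M(G_i)$ carries exactly $s\,(d-1)!$ ones, so $M(G_i)\in\Lambda^d(n,r)$ with $r_j=s\,(d-1)!$), this number is bounded by a quantity depending only on $s$. Writing $P(s)$ for the maximum of $\per M(H)$ over simple $s$-regular $d$-uniform hypergraphs $H$ on $n$ vertices, I obtain
$$\Phi(n,d)\le\prod_{s=1}^{t}\Bigl(\frac{P(s)}{\mu(n,d)}\Bigr)^{1/d}=\mu(n,d)^{-t/d}\prod_{s=1}^{t}P(s)^{1/d}.$$
Since $\mu(n,d)=(\mu(n,d)^{1/n})^{n}$ and $\tfrac{tn}{d}=\binom{n}{d}=(1+o(1))\tfrac{n^{d}}{d!}$, the first factor already supplies exactly the factor $\mu(n,d)^{-1/n}$ of the target base, and it remains to prove $\prod_{s=1}^{t}P(s)^{1/d}\le((1+o(1))n^{d-1}/e^{d})^{n^{d}/d!}$.

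Next I would split the product at a threshold $s_0=n^{d-2}\omega(n)$ with $\omega(n)\to\infty$ chosen slowly enough that $\omega(n)\log n=o(n)$. For the bulk $s\ge s_0$ the hypothesis $\min_j r_j/n^{d-2}=s\,(d-1)!/n^{d-2}\to\infty$ of Theorem~\ref{asym} holds, yielding $P(s)\le n!^{\,d-2}e^{o(n)}S(x_s)^{n}$, where $x_s=s\,(d-1)!/n^{d-2}$ and $S(x)=\lceil x\rceil!^{1/\lceil x\rceil}$. For the tail $s<s_0$, a range of only $s_0=o(t/\log n)$ values, Proposition~\ref{triv} already gives the crude bound $P(s)\le(s\,(d-1)!)^{n}$, whose total contribution to $\sum_s\log P(s)$ is $O(s_0 n\log n)=o(tn)$ and therefore disappears into the $(1+o(1))$.

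Finally I would carry out the Stirling bookkeeping over the bulk. Taking logarithms, $\sum_s\log P(s)\approx t(d-2)\log n!+n\sum_s\log S(x_s)$. Using $S(x)\sim x/e$ together with $\sum_{s\le t}\log x_s=\log t!+t\log(d-1)!-t(d-2)\log n\sim t(\log n-1)$, one finds $n\sum_s\log S(x_s)\sim tn(\log n-2)$; adding $t(d-2)\log n!\sim t(d-2)(n\log n-n)$, the two terms combine to $tn((d-1)\log n-d)$. Dividing by $d$ and using $tn/d\sim n^{d}/d!$ reproduces $((1+o(1))n^{d-1}/e^{d})^{n^{d}/d!}$, which with the factor $\mu(n,d)^{-t/d}$ gives the stated bound for every $d\ge2$ (the exceptional values $\mu(n,2),\mu(n,3)$ enter only through this single factor). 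I expect the main obstacle to be exactly this asymptotic accounting: guaranteeing that the $e^{o(n)}$ in Theorem~\ref{asym} is uniform over the whole bulk range of degrees $s$, and that the tail factors (where Theorem~\ref{asym} fails) are genuinely negligible, so that the leading constant $e^{-d}$ comes out precisely rather than merely up to a bounded factor.
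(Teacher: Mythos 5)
Your proposal is correct and follows essentially the same route as the paper: bound the choices at each step by Theorem~\ref{fact}, note that the remaining hypergraph is regular so each hyperplane of its adjacency matrix carries $s(d-1)!$ ones, apply Theorem~\ref{asym} to the bulk of the degree range and Proposition~\ref{triv} to the sparse tail, and finish with Stirling. The only (immaterial) difference is bookkeeping: the paper groups the steps into $n$ intervals $\Delta(l)$ and replaces each group by a single extremal matrix with hyperplane sums $(n-l)n^{d-2}$, whereas you apply Theorem~\ref{asym} term by term and evaluate $\log t!$ directly; both rely equally on the uniformity of the $e^{o(n)}$ factor that you flag.
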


\begin{proof}
Let $\varphi_1, \ldots, \varphi_i$ be a set of $i$ disjoint 1-factors of the hypergraph $G^d_n$. Denote by $G_i$ the hypergraph $G^d_n \setminus \left\{ \varphi_1, \ldots, \varphi_i\right\}$, and let $M_i=M(G_i)$ be the adjacency matrix of the hypergraph $G_i$.

By Theorem~\ref{fact}, the number of 1-factors  of $G^d_n$ disjoint with  $\varphi_1, \ldots, \varphi_i$  is not greater than $\left(\frac{\per M_i}{\mu(n,d)} \right)^{1/d}.$ Then the number of 1-factorizations of  $G^d_n$ satisfies
$$\Phi(n,d) \leq \prod\limits_{i=0}^{t-1} \max\limits_{\varphi_1, \ldots, \varphi_i}  \left(\frac{\per M_i}{\mu(n,d)} \right)^{1/d},$$
where $M_0 = M(G^d_n)$, and the maximum is over all sets of disjoint 1-factors.

Let us find the number of ones in the hyperplanes of $M_i$. First we note that there are  $n(t-i)/d$ hyperedges in $G_i$. Each hyperedge of $G_i$ corresponds to $d!$ unit entries in the matrix $M_i$, and sets of corresponding entries for different hyperedges are disjoint.  Consequently there are $n(t-i)(d-1)!$ ones in the matrix  $M_i$. Since each vertex of the hypergraph $G_i$ has the same degree, we see that each hyperplane of $M_i$  contains the same number of ones, and this number is
$$R_i =  (t-i)(d-1)!.$$

It can be checked that for all $i$ from the interval $\Delta(l) = \left[l \frac{n^{d-2}}{(d-1)!}, (l+1) \frac{n^{d-2}}{(d-1)!}\right]$ the value of $R_i$ is not greater than $ (n-l) n^{d-2}.$   Notice that there are at most $\left\lceil \frac{n^{d-2}}{(d-1)!}\right\rceil$  values of $i$ belong to the interval $\Delta(l)$. 
Let the matrix $N(l)$ have the maximal permanent among all $M_i$ for $i \in \Delta(l)$. Change some zeros of the matrix $N(l)$ to ones so that each hyperplane of some direction contains   $ (n-l) n^{d-2}$  ones and denote the constructed matrix by $M(l)$. Obviously, $\per M_i \leq \per N(l) \leq \per M(l)$ for all  $i \in \Delta(l)$.

Therefore we can rewrite the upper bound on the number of 1-factorizations as follows:
$$\Phi(n,d) \leq   \mu(n,d)^{-t/d} \left(\prod\limits_{l=0}^{n-1} \per^{1/d} M(l) \right)^{\left\lceil \frac{n^{d-2}}{(d-1)!}\right\rceil}.$$

Split the product of the permanents into two parts: when $l$ belongs to the interval $\left[ 0, n-\sqrt{n} \right]$, and when $l$ is in $\left( n-\sqrt{n}, n-1\right]$. For the first part we use Theorem~\ref{asym}:
$$\prod\limits_{l=0}^{n-\sqrt{n}}  \per M(l)  \leq  \prod\limits_{l=0}^{n-\sqrt{n}} n!^{d-2} e^{o(n)} S^n\left(n-l\right) .$$

By the definition of the function $S$ and by Stirling's approximation, we get
\begin{gather*}
\prod\limits_{l=0}^{n-\sqrt{n}} e^{o(n)} S^n\left(n-l\right) = \prod\limits_{l=0}^{n-\sqrt{n}} e^{o(n)}  \left(n-l\right)! ^{\frac{n}{\left(n-l\right)}} \\  = \prod\limits_{l=0}^{n-\sqrt{n}} e^{o(n)} \left(n-l\right)^{n} e^{-n} \leq e^{-n^2+ o(n^2)} n!^{n} = e^{o(n^2)} \left(\frac{n}{e^2}\right)^{n^2}.
\end{gather*}

For the second part of the product we use Proposition~\ref{triv}:
$$\prod\limits_{l=n-\sqrt{n}}^n  \per M(l) \leq \left(n^{d-2}\sqrt{n}\right)^{n\sqrt{n}} =  e^{o(n^2)}. $$

Thus,
 $$\prod\limits_{l=0}^{n-1} \per^{1/d} M(l) \leq  \left(n!^{(d-2)n}e^{o(n^2)} \left(\frac{n}{e^2}\right)^{n^2}\right)^{1/d} .$$
Insert this bound into the inequality for 1-factorizations and obtain
\begin{gather*}
\Phi(n,d) \leq   \mu(n,d)^{-t/d} e^{o(n^d)} \left(\frac{n}{e^2}\right)^{\frac{n^d}{d!}} n!^{\frac{(d-2)n^{d-1}}{d!}} \\ = \mu(n,d)^{-t/d} e^{o(n^d)} \left(\frac{n^{d-1}}{e^{d}}\right)^ {\frac{n^d}{d!} }.
\end{gather*}

Recall that $t ={{n-1}\choose {d-1}} = \frac{n^{d-1}}{(d-1)!} + o(n^{d-1})$. Therefore,
$$\Phi(n,d) \leq\left((1+o(1))\frac{n^{d-1}}{\mu(n,d)^{1/n} e^d}\right)^{\frac{n^d}{d!}} \mbox{ as }n \rightarrow \infty.$$

\end{proof}

\begin{sled}
If $d = 3$, then the number of 1-factorizations of the complete $3$-uniform hypergraph $G^3_n$ on $n$ vertices  satisfies
$$\Phi(n,3) \leq \left((1+o(1))\frac{3 n^{2}}{2^{3/2} \cdot e^3} \right)^{\frac{n^3}{6}} \mbox{ as }n \rightarrow \infty.$$
If $d \geq 4$, then the number of 1-factorizations of $G^d_n$ satisfies
$$\Phi(n,d) \leq \left((1+o(1)) \left(\frac{ d}{ e} \right)^d \frac{n^{d-1}}{d!^{2-1/d}} \right)^{\frac{n^d}{d!}} \mbox{ as }n \rightarrow \infty.$$
\end{sled}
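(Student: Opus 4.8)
The plan is simply to specialize the bound of the theorem immediately preceding this corollary, namely
$$\Phi(n,d) \leq \left((1+o(1))\frac{n^{d-1}}{\mu(n,d)^{1/n} e^d}\right)^{\frac{n^d}{d!}} \quad \text{as } n \to \infty,$$
by substituting the two explicit formulas for $\mu(n,d)$ recorded in Theorem~\ref{fact}. Because $\mu(n,d)$ enters the bound only through $\mu(n,d)^{1/n}$, and in both regimes $\mu(n,d)$ is defined as an $n$th power of a quantity depending only on $d$, the factor $\mu(n,d)^{1/n}$ is itself that constant, and the remaining work is an elementary simplification that leaves the exponent $\frac{n^d}{d!}$ and the $o(1)$ term untouched.

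First I would dispose of the case $d=3$. Here $\mu(n,3)=\bigl(\tfrac{2^{3/2}}{3}\bigr)^n$, so $\mu(n,3)^{1/n}=\tfrac{2^{3/2}}{3}$, giving
$$\frac{n^{2}}{\mu(n,3)^{1/n}e^{3}}=\frac{3\,n^{2}}{2^{3/2}\,e^{3}},$$
which, inserted into the parent bound with $d=3$ (so that $n^d/d!=n^3/6$), produces the first displayed inequality. Then, for $d\geq 4$, I would use $\mu(n,d)=\bigl(\tfrac{d!^{2}}{d^{d}d!^{1/d}}\bigr)^n$, so that $\mu(n,d)^{1/n}=\tfrac{d!^{2-1/d}}{d^{d}}$ after combining the exponents of $d!$ via $d!^{2}/d!^{1/d}=d!^{2-1/d}$. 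This yields
$$\frac{n^{d-1}}{\mu(n,d)^{1/n}e^{d}}=\frac{n^{d-1}d^{d}}{d!^{2-1/d}e^{d}}=\left(\frac{d}{e}\right)^{d}\frac{n^{d-1}}{d!^{2-1/d}},$$
and substituting this back delivers the second inequality.

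There is no genuine obstacle in this argument, as it is a direct computation from the preceding theorem. The only points requiring a little care are the manipulation of the fractional exponent $d!^{2-1/d}$ and the observation that, since the substituted base is a fixed constant in $n$, it can be absorbed cleanly into the $\frac{n^d}{d!}$th power without disturbing the $(1+o(1))$ factor inherited from the parent bound.
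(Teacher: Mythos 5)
Your proposal is correct and is exactly the intended derivation: the paper states this corollary without proof as an immediate specialization of the preceding theorem, obtained by substituting $\mu(n,3)^{1/n}=2^{3/2}/3$ and $\mu(n,d)^{1/n}=d!^{2-1/d}/d^{d}$ into the bound $\bigl((1+o(1))\,n^{d-1}/(\mu(n,d)^{1/n}e^{d})\bigr)^{n^{d}/d!}$. Your algebra in both cases checks out.
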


\section{Acknowledgments}
The author is grateful to V.N. Potapov for constant attention to this work and for useful discussions.
The work is supported by the Russian Science Foundation (grant 14--11--00555).

\end{document}